\renewcommand{\@seccntformat}[1]
{{\csname the#1\endcsname}.\hspace{0.3em}}
\renewcommand{\section}{\@startsection
{section}
{1}
{0mm}
{-1.5\baselineskip}
{\baselineskip}
{\bfseries\normalsize}}
\renewcommand{\subsection}{\@startsection
{subsection}
{2}
{0mm}
{-\baselineskip}
{0.5\baselineskip}
{\normalsize\itshape}}
\renewcommand{\subsubsection}{\@startsection
{subsubsection}
{3}
{0mm}
{-.5\baselineskip}
{-2mm}
{\normalsize\itshape}}
\theoremstyle{plain}
\newtheorem*{theorem*}{Theorem}
\newtheorem{theorem}{Theorem}[section]
\newtheorem{lemma}{Lemma}[section]
\newtheorem{corollary}[theorem]{Corollary}
\newtheorem{prop}[lemma]{Proposition}
\newtheorem{claim}[lemma]{Claim}
\newtheorem*{corollary*}{Corollary}
\theoremstyle{definition}
\newtheorem*{defin*}{Definition}
\newtheorem{defin}{Definition}[section]
\theoremstyle{remark}
\newtheorem*{quest*}{Question}
\DeclareMathAlphabet{\matheur}{U}{eur}{m}{n}
\DeclareMathAlphabet{\matheus}{U}{eus}{m}{n}
\DeclareMathAlphabet{\matheuf}{U}{euf}{m}{n}
\numberwithin{equation}{section}
\newcommand{\abs}[1]{\left\lvert#1\right\rvert}
\DeclareMathOperator{\Tan}{Tan}
\DeclareMathOperator{\hess}{Hess}
\DeclareMathOperator{\dist}{dist}
\DeclareMathOperator{\grad}{grad}
\begin{document}

\author{Gerasim  Kokarev
\\ {\small\it School of Mathematics, University of Leeds}
\\ {\small\it Leeds, LS2 9JT, United Kingdom}
\\ {\small\it Email: {\tt G.Kokarev@leeds.ac.uk}}
}

\title{On the essential spectra of submanifolds in the hyperbolic space}
\date{\small\it To Nikolai Nadirashvili on his 70th birthday}
\maketitle

\begin{abstract}
\noindent
We study relationships between asymptotic geometry of submanifolds in the hyperbolic space and their regularity properties near the ideal boundary, revisiting some of the related results in the literature. In particular, we discuss hypotheses when minimal submanifolds meet the ideal boundary orthogonally, and compute the essential spectrum of the Laplace operator on submanifolds that are asymptotically close to minimal submanifolds.
\end{abstract}

\medskip
\noindent
{\small
{\bf Mathematics Subject Classification (2010)}: 53A10, 58J50, 49Q05

\noindent
{\bf Keywords}: minimal submanifolds, tangent cone, Laplace operator, essential spectrum.}

%
%
%


\section{Introduction}
\label{intro}

\subsection{Main results and discussion}

Consider a submanifold $\Sigma^m$ in the hyperbolic space $\mathbf H^{n+1}$, where by the latter we mean the complete simply connected $(n+1)$-dimensional Riemannian manifold whose all sectional curvatures are equal to $-1$. The hyperbolic space $\mathbf H^{n+1}$ is naturally identified with the unit ball $\mathbb B^{n+1}$ via the Poincar\'e model, and the boundary at infinity $S^n_\infty(\mathbf H^{n+1})$ corresponds to the boundary sphere $\partial\mathbb B^{n+1}$. We say that a submanifold $\Sigma^m$ is asymptotic to a submanifold $\Gamma^{m-1}\subset S^n_{\infty}(\mathbf H^{n+1})$ if the closure $\overline{\Sigma^{m}}$ in the Euclidean topology intersects the boundary at infinity $S^n_{\infty}(\mathbf H^{n+1})$ precisely by the submanifold $\Gamma^{m-1}$. The following definition introduces a stronger regularity hypothesis on the behaviour $\Sigma^m$ near $\Gamma^{m-1}$.

\begin{defin}
\label{reg:infty}
A submanifold $\Sigma^m\subset\mathbf H^{n+1}$ is called {\em regular at infinity}, if there exists a submanifold $\Gamma^{m-1}\subset S^n_\infty(\mathbf H^{n+1})$ such that the union $\Sigma^{m-1}\cup\Gamma^{m-1}$ is a $C^1$-smooth submanifold with boundary that meets the sphere at infinity $S^n_\infty(\mathbf H^{n+1})$ orthogonally.
\end{defin}

Note that we do not assume that $\Sigma^m$ is a proper submanifold of $\mathbf{H}^{n+1}$, but the regularity hypothesis above forces $\Sigma^m$ to be a proper submanifold in the complement of a sufficiently large ball in $\mathbf{H}^{n+1}$. Principal examples of such submanifolds are provided by the boundary regularity theory of area-minimising currents due to~\cite{HL87,L89b}. More precisely, by~\cite{HL87} for $m=n$ the regular set of any area-minimising rectifiable $m$-current that is asymptotic to a submanifold $\Gamma^{m-1}\subset S^n_\infty(\mathbf H^{n+1})$ is regular at infinity in the sense of Definition~\ref{reg:infty}. In higher codimension the statement is known to hold only for certain area-minimising currents, such as Anderson solutions, see~\cite{L89b}. In addition, as was pointed out in~\cite[Remark~5.2]{Ko23}, any minimal submanifold that extends to a $C^1$-smooth submanifold with boundary in the closure $\bar{\mathbf H}^{n+1}$, obtained by adding the sphere at infinity $\mathbf{H^{n+1}}\cup S^n_\infty(\mathbf H^{n+1})$, has to meet the sphere at infinity orthogonally. This is a common property of minimal submanifolds, generalising the well-known property of geodesics. The argument for it can be extracted from the boundary regularity theory mentioned above, but it seems that there is no explicit discussion of this property in literature. One of the aims of this note is to address this gap. More precisely, in Section~\ref{proof:t1} we prove the following theorem.
\begin{theorem}
\label{t1}
Let $\Sigma^m\subset\mathbf{H}^{n+1}$ be a minimal submanifold that extends to a $C^1$-smooth submanifold with boundary $\Gamma^{m-1}\subset S^n_\infty(\mathbf H^{n+1})$ in the closure $\bar{\mathbf H}^{n+1}$. Then $\Sigma^m$ meets the sphere at infinity $S^n_\infty(\mathbf H^{n+1})$ orthogonally.
\end{theorem}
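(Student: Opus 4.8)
The plan is to work in the upper half-space model of $\mathbf{H}^{n+1}$, where the hyperbolic metric is $g = x_{n+1}^{-2}\,g_{\mathrm{eucl}}$ with $x_{n+1}>0$ the last Euclidean coordinate, and the sphere at infinity is identified with the boundary hyperplane $\{x_{n+1}=0\}$ together with the point at infinity. The orthogonality we want to establish is a statement about the Euclidean geometry of $\overline{\Sigma^m}$ at a boundary point $p\in\Gamma^{m-1}$: I must show that the Euclidean tangent space $T_p\overline{\Sigma^m}$ is perpendicular to the boundary hyperplane, i.e. contains the $x_{n+1}$-direction. By the hypothesis, $\overline{\Sigma^m}$ is a $C^1$ submanifold with boundary near $p$, so this tangent space exists; the content of the theorem is that minimality forces it into the vertical position. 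The key translation step is therefore to express the minimal surface equation in these coordinates.

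\medskip
\noindent
First I would write down how the second fundamental form, or equivalently the mean curvature vector, transforms under the conformal change $g = x_{n+1}^{-2} g_{\mathrm{eucl}}$. The standard formula relates the hyperbolic mean curvature vector $\vec{H}_g$ to the Euclidean mean curvature vector $\vec{H}_{\mathrm{eucl}}$ by
\begin{equation*}
\vec{H}_g = x_{n+1}^{2}\bigl(\vec{H}_{\mathrm{eucl}} + m\,(\nabla \log x_{n+1})^{\perp}\bigr),
\end{equation*}
where $(\cdot)^{\perp}$ denotes the component normal to $\Sigma$ in the Euclidean metric. Minimality in $\mathbf{H}^{n+1}$ means $\vec{H}_g \equiv 0$, so $\Sigma^m$ satisfies $\vec{H}_{\mathrm{eucl}} = -m\,(\nabla\log x_{n+1})^{\perp}$. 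Since $\nabla\log x_{n+1}$ is the vertical unit vector divided by $x_{n+1}$, this reads $\vec{H}_{\mathrm{eucl}} = -\tfrac{m}{x_{n+1}}\,e_{n+1}^{\perp}$, where $e_{n+1}^{\perp}$ is the Euclidean-normal projection of the vertical direction. This is the governing equation, and everything hinges on it.

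\medskip
\noindent
The heart of the argument is a blow-up near the boundary point. Rescaling the Euclidean coordinates about $p$ by a factor $\lambda\to\infty$ sends $x_{n+1}\mapsto \lambda\,x_{n+1}$, so the right-hand side $-\tfrac{m}{x_{n+1}}e_{n+1}^{\perp}$ scales like $\lambda^{-1}$ and the rescaled surfaces have Euclidean mean curvature tending to zero; meanwhile the $C^1$ regularity up to the boundary guarantees the rescalings converge to the half tangent plane $T_p\overline{\Sigma^m}$. Thus the blow-up limit is a \emph{Euclidean-minimal} half-plane $P$ with boundary $\partial P \subset \{x_{n+1}=0\}$, which is flat. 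The decisive point is that the singular term $\tfrac{1}{x_{n+1}}$ blows up as $x_{n+1}\to 0$ unless it is annihilated: along the boundary the equation can survive in the limit only if $e_{n+1}^{\perp}\to 0$ as one approaches $\partial P$, i.e. the vertical direction becomes Euclidean-tangent to $\Sigma$ at the boundary. Equivalently, $e_{n+1}\in T_p\overline{\Sigma^m}$, which is exactly the orthogonal-meeting condition.

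\medskip
\noindent
The step I expect to be the main obstacle is making this blow-up rigorous with only $C^1$ regularity in hand, since the mean-curvature equation involves second derivatives that are not a priori controlled up to the boundary. The clean way around this is to avoid second derivatives entirely and argue in the weak (first-order) formulation: test the stationarity of $\Sigma^m$ for the hyperbolic area against a compactly supported vector field and integrate by parts, producing a boundary integral over $\Gamma^{m-1}$ whose integrand is precisely the Euclidean angle defect $\langle e_{n+1}, \nu\rangle$ between the vertical direction and the inward Euclidean conormal $\nu$ of $\Sigma$ along $\Gamma$. The $C^1$ hypothesis is exactly enough to justify this integration by parts and to give the conormal a continuous meaning on $\Gamma^{m-1}$; the first variation formula then forces the boundary term to vanish for all test fields, yielding $\langle e_{n+1},\nu\rangle = 0$ pointwise on $\Gamma^{m-1}$, which is the orthogonality. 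I would therefore carry out the proof in this weak form, using the blow-up picture above only as geometric motivation.
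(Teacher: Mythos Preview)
Your approach is genuinely different from the paper's. The paper never touches the conformal mean-curvature formula or any first-variation identity to prove Theorem~\ref{t1}. Instead it uses Anderson's convex hull property: a minimal $\Sigma^m$ asymptotic to $\Gamma^{m-1}$ must lie in the hyperbolic convex hull of $\Gamma^{m-1}$, which in the upper half-space model means $\Sigma^m$ avoids every Euclidean half-ball $B_r(x,0)$ with $r<\dist(x,\Gamma)$. Removing all such balls from a slab leaves a region $W$ containing $\Sigma^m$ near the boundary, and a direct computation (using only the $C^1$-smoothness of $\Gamma$) shows that the tangent cone of $W$ at any $(x,0)\in\Gamma$ is exactly $T_x\Gamma\times[0,\infty)$. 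Hence $\Tan(\Sigma^m,(x,0))\subset T_x\Gamma\times[0,\infty)$; since the $C^1$ hypothesis on $\overline{\Sigma^m}$ forces this tangent cone to be an $m$-dimensional half-space containing $T_x\Gamma$, equality holds and orthogonality follows. The argument is purely geometric and uses minimality only through the convex hull barrier, so no analysis near the singular conformal boundary is needed.

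Your proposed route has a real gap in the final step. You want to ``test the stationarity of $\Sigma^m$ for the hyperbolic area against a compactly supported vector field and integrate by parts, producing a boundary integral over $\Gamma^{m-1}$,'' and then conclude this boundary term vanishes. But stationarity only gives $\int_\Sigma \mathrm{div}_\Sigma^g X\, dA_g=0$ for $X$ compactly supported in the \emph{open} upper half-space; to obtain a boundary term on $\Gamma$ you need $X$ nonzero there, and then the integrand is weighted by $dA_g=x_{n+1}^{-m}\,dA_{\mathrm{eucl}}$, which is non-integrable near $\Gamma$. There is no free-boundary condition to invoke: $\Gamma$ is fixed, and the hyperbolic area is infinite, so the first variation with boundary-touching fields is simply not defined without a renormalisation that you have not supplied. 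The statement that ``the $C^1$ hypothesis is exactly enough to justify this integration by parts'' is precisely where the argument breaks. One can salvage something in this direction---for instance, integrating the identity $\Delta_\Sigma^{\mathrm{eucl}}\,x_{n+1}=\tfrac{m}{x_{n+1}}|e_{n+1}^\perp|^2$ over $\Sigma\cap\{x_{n+1}>\epsilon\}$ and showing the boundary flux stays bounded as $\epsilon\to 0$ forces $|e_{n+1}^\perp|^2/x_{n+1}$ to be integrable, which is incompatible with $e_{n+1}^\perp\neq 0$ on $\Gamma$---but this is an integrability/contradiction argument, not the ``boundary term vanishes for all test fields'' mechanism you describe, and it needs its own care (regularity of the level sets $\{x_{n+1}=\epsilon\}$, the degenerate case $e_{n+1}^T\to 0$). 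The paper's convex hull argument avoids all of this.

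Incidentally, the conformal relation you wrote down does appear in the paper (Section~\ref{exas}, relation~\eqref{h-h}), but it is used there for a different purpose: to show $C^2$-regular submanifolds are asymptotically minimal, and to relate Euclidean asymptotic minimality to orthogonal incidence \emph{after} Theorem~\ref{t1} has already been established by the convex hull method.
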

In Section~\ref{proof:t1} we also give an account on the related background material, aimed at a non-specialist reader. In particular, we discuss properties of the distance function to a submanifold, and obtain relations for the tangent cone of $\Sigma^m$ at a point on the ideal boundary under more general assumptions. In dimension two we are able to compute the boundary tangent cone under mild topological assumptions only.

We proceed with the following definition.
\begin{defin}
\label{am}
A submanifold $\Sigma^m\subset\mathbf H^{n+1}$ is called {\em asymptotically minimal}, if its mean curvature vector $H_x$ satisfies the following relation
$$
\sup\{\abs{H_x}: x\in\Sigma^m\backslash\mathbf{B}_R\}\longrightarrow 0,\qquad\text{as}\quad R\to+\infty,
$$
where $\mathbf{B}_R$ is the hyperbolic ball of radius $R$ centred at a reference point $p\in\mathbf{H}^{n+1}$.
\end{defin}
Clearly, the definition above does not depend on a reference point $p\in\mathbf{H}^{n+1}$. Examples of such submanifolds are ubiquitous. More precisely, as we show in  Section~\ref{exas}, any submanifold $\Sigma^m\subset\mathbf{H}^{n+1}$ that extends to a $C^2$-smooth submanifold with boundary $\Gamma^{m-1}\subset S^n_\infty(\mathbf H^{n+1})$ is asymptotically minimal in the sense of Definition~\ref{am}. Moreover, it turns out that the rate of convergence of the mean curvature  to zero is related to the property of a submanifold to meet the sphere at infinity orthogonally.

Our next aim is to show that the essential spectrum of the Laplace operator on asymptotically minimal manifolds is determined by the regularity hypothesis at infinity. More precisely, the following statement holds.
\begin{theorem}
\label{t2}
Let $\Sigma^m\subset\mathbf H^{n+1}$ be an  asymptotically minimal submanifold that is regular at infinity. Then  the essential spectrum of the Laplace operator on $\Sigma^m$ contains the interval $[(m-1)^2/4,+\infty)$. Moreover, if $\Sigma^m\subset\mathbf H^{n+1}$ is a proper submanifold, possibly with boundary, then the essential spectrum coincides with the interval $[(m-1)^2/4,+\infty)$.
\end{theorem}
When $\Sigma^m$ is not a complete manifold, by the essential spectrum we mean the essential spectrum of the Friedrichs extension of the Laplace operator defined on compactly supported smooth functions on $\Sigma^m$. We refer to~\cite{BD,Ko23} for the related background material. As was shown in~\cite{Ko23}, the spectrum of any minimal submanifold that is regular at infinity has empty discrete part, and coincides with the interval $[(m-1)^2/4,+\infty)$. Thus, Theorem~\ref{t2} can be viewed as a stability statement for the essential spectrum of minimal submanifolds, strengthening general results in~\cite{DL79,Gla66} for this particular setting. For example, any proper submanifold that is $C^2$-smooth up to boundary at infinity $\Gamma^{m-1}$ and touches a $C^1$-smooth up to boundary minimal submanifold at $\Gamma^{m-1}$ satisfies the hypotheses of Theorem~\ref{t2}.

Let us mention that when the submanifold $\Sigma^m$ in Theorem~\ref{t2} is complete and $C^3$-smooth up to boundary $\Gamma^{m-1}$ in $ S^n_\infty(\mathbf H^{n+1})$, it can be viewed as a conformally compact manifold whose sectional curvatures approach $-1$ at $\Gamma^{m-1}$. In this case the conclusion of Theorem~\ref{t2} can be derived from the work~\cite{Mz91}, see also other criteria in~\cite{EW04,Ku97}. However, under our hypotheses  the statement  appears to be new, and the motivation and methods  are very different from the ones in~\cite{Mz91}. In particular, in view of the extremal problem for the bottom of the spectrum, described in~\cite{Ko23}, we see that when $\Sigma^m$ ranges over asymptotically minimal submanifolds that are regular at infinity, the bottom of the spectrum $\lambda_0(\Sigma^m)$ ranges in the interval $[0,(m-1)^2/4]$, and is maximised when the discrete spectrum of $\Sigma^m$ is empty. By Theorem~\ref{t1} and the results in~\cite{Ko23}, the latter occurs, for example,  for minimal submanifolds that are $C^1$-smooth up to boundary at infinity, as well as for regular sets of certain area-minimising rectifiable currents. To our knowledge, the higher regularity up to boundary at infinity is unknown for area-minimising rectifiable currents in codimension greater than one. In certain cases the higher regularity up to boundary might fail even in codimension one, see~\cite{To96}.

\subsection{Organisation of the paper}
Section~\ref{proof:t1} is devoted to the discussion of tangent cones to minimal submanifolds at points on the ideal boundary in the hyperbolic space. We collect the necessary background material, obtain relations for the tangent cone of an arbitrary minimal submanifold, and prove Theorem~\ref{t1}. In this section we also show that the assumptions in Theorem~\ref{t1} on regularity at the ideal boundary can be replaced by topological assumptions in dimension two, see Theorem~\ref{ort:2}. In the next section we obtain an inequality for the Cheeger isoperimetric constant of an annulus in submanifolds with sufficiently small mean curvature, see Theorem~\ref{tc}. It is then used to obtain the lower bound for the bottom of the essential spectrum of asymptotically minimal submanifolds. Here our hypotheses on the ambient manifold are more general -- it is assumed to be a Cartan-Hadamard space whose sectional curvatures are not greater than $-1$. In Section~\ref{exas} we discuss the relationships between the decay of the mean curvature, $C^2$-regularity at the ideal boundary, and the property of meeting the ideal boundary orthogonally. The last section, Section~\ref{proofs}, is devoted to the proof of Theorem~\ref{t2}. The argument is essentially the repetition of the one in~\cite{Ko23}, but it is included in order to outline main ideas and explain differences that have to be made under our hypotheses.

\smallskip
\noindent
{\em Acknowledgements.} I am grateful to the participants of the LMS-Bath Symposium "Advances in Spectral Theory", held in Bath, July 2024, for a number of discussions on related topics. I am also grateful to the referee for careful reading of the paper, and a number of useful comments.

\section{Tangent cones at points on ideal boundary}
\label{proof:t1}

\subsection{Preliminary material}
We start with describing notation and background material necessary for a proof of Theorem~\ref{t1}. Let $\Gamma^k\subset\mathbb R^n$ be a topological submanifold. We say that $\Gamma^k$ is a {\em $k$-dimensional surface} if locally it is the graph of a continuous vector-function. In other words, for any point $p\in\Gamma^{k}$ there exists a continuous vector-function $\varphi:U\to V$, where $U\subset\mathbb R^k$  and $V\subset\mathbb R^{n-k}$ are open subsets, such that 
$$
\{(z,\varphi(z)): z\in U\}=\Gamma^{k}\cap(U\times V),
$$
and this set contains the point $p$. Further, we say that $\Gamma^k$ is {\em differentiable} at a point $p$, if there is a graphical chart containing $p$ such that the defining vector-function $\varphi$ is differentiable at the point $x$, where $p=(x,\varphi(x))$. For such a surface $\Gamma^k$, the tangent space $T_p\Gamma^k$ is defined as the span of the vectors
$$
\bar e_i=(e_i,\frac{\partial\varphi}{\partial x^i}(x)),\qquad\text{where}\quad i=1,\ldots, k,
$$
and $(e_i)$ is the standard basis of $\mathbb R^k$. By a normal vector $\nu$ to $\Gamma^k$ at $p$ we call a vector in $\mathbb R^n$ that is orthogonal to $T_p\Gamma^{k}$. In particular, we may define the following function
$$
r\longmapsto \delta(p,r)=\inf_{\nu}\dist(p+r\nu,\Gamma^k),
$$
where the infimum is taken over all unit normal vectors $\nu$ to $\Gamma^k$ at $p$.

For the sequel we need the following statement, which is briefly mentioned in~\cite{HL87,To96} in the case $k=n-1$. For reader's convenience we also outline the proof below.
\begin{prop}
\label{dist:prop}
Let $\Gamma^k\subset\mathbb R^n$ be a $k$-dimensional surface that is differentiable at a point $p\in\Gamma^{k}$. Then the following limiting relation holds:
$$
\frac{\delta(p,r)}{r}\longrightarrow 1,\qquad\text{when}\quad r\to 0+.
$$
Moreover, if $\Gamma^k$ is a $C^1$-smooth submanifold of $\mathbb R^n$, then for any compact set $K\subset\Gamma^m$ we have
$$
\sup_{p\in K}\abs{\frac{\delta(p,r)}{r}-1}\longrightarrow 0, \qquad\text{when}\quad r\to 0+.
$$
\end{prop}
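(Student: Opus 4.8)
The plan is to estimate $\delta(p,r)$ from above and below by $r\,(1+o(1))$ directly from the definition, using the differentiability of $\Gamma^k$ at $p$ to control the graph of the defining function by its tangent plane. First I would fix $p$ and, after a translation, assume $p=0$. Let $\pi=\pi_p$ denote the orthogonal projection of $\mathbb R^n$ onto the normal space $N_p\Gamma^k=(T_p\Gamma^k)^\perp$; it is $1$-Lipschitz and fixes every unit normal $\nu$. The upper bound is then immediate: since $p\in\Gamma^k$, for every unit normal $\nu$ we have $\dist(p+r\nu,\Gamma^k)\le\abs{(p+r\nu)-p}=r$, whence $\delta(p,r)\le r$ and $\delta(p,r)/r\le 1$ for all $r>0$.

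For the lower bound, fix $\epsilon\in(0,1/2)$. Differentiability of $\Gamma^k$ at $p$ means that in a graphical chart the defining function differs from its tangent plane by a first-order error; equivalently, the normal component of $q-p$ is $o(\abs{q-p})$, so there is $\rho>0$ with
$$
\abs{\pi(q-p)}\le\epsilon\,\abs{q-p}\qquad\text{for all }q\in\Gamma^k\cap B(p,\rho),
$$
where $B(p,\rho)$ is the Euclidean ball. Now let $r\le\rho/2$ and let $\nu$ be any unit normal. I split the points $q\in\Gamma^k$ into two ranges. If $\abs{q-p}>2r$, then $\abs{q-(p+r\nu)}\ge\abs{q-p}-r>r$. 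If $\abs{q-p}\le 2r\le\rho$, then, using that $\pi$ is $1$-Lipschitz and $\pi\nu=\nu$,
$$
\abs{q-(p+r\nu)}\ge\abs{\pi(q-p)-r\nu}\ge r-\abs{\pi(q-p)}\ge r-2\epsilon r=(1-2\epsilon)r.
$$
Taking the infimum over $q\in\Gamma^k$ and then over $\nu$ gives $\delta(p,r)\ge(1-2\epsilon)r$ for all $r\le\rho/2$, so $\liminf_{r\to 0+}\delta(p,r)/r\ge 1-2\epsilon$. Since $\epsilon$ is arbitrary, combining with the upper bound yields $\delta(p,r)/r\to 1$.

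For the uniform statement, the only ingredient that must be upgraded is the tilt inequality $\abs{\pi_p(q-p)}\le\epsilon\,\abs{q-p}$: I would show that when $\Gamma^k$ is $C^1$-smooth, both the chart radius $\rho$ and this estimate can be chosen uniformly for $p$ in a compact set $K$. This follows from continuity of the map $p\mapsto\pi_p$ (equivalently $p\mapsto T_p\Gamma^k$) together with compactness — covering $K$ by finitely many $C^1$ graphical charts and invoking uniform continuity of the first derivatives of the defining functions produces a single $\rho=\rho(\epsilon,K)>0$ for which the displayed inequality holds for every $p\in K$. With this uniform estimate in hand, every bound in the pointwise argument holds uniformly over $p\in K$, and one reads off $\sup_{p\in K}\abs{\delta(p,r)/r-1}\le 2\epsilon$ for $r\le\rho/2$, which is the assertion.

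I expect the main obstacle to be precisely this uniformization step, that is, producing the radius $\rho$ and the tilt estimate simultaneously for all $p\in K$; the pointwise estimates themselves are elementary once the orthogonal projection $\pi_p$ is set up. A secondary point worth checking is that the infimum defining $\dist(p+r\nu,\Gamma^k)$ is effectively local, since a priori it ranges over the whole surface; this is handled by the first range $\abs{q-p}>2r$ above, where the distance already exceeds $r$ and so cannot compete with the value $r$ attained at $q=p$.
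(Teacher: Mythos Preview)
Your argument is correct and follows essentially the same route as the paper: both proofs localise the distance computation to points $q\in\Gamma^k$ with $\abs{q-p}\le 2r$, use differentiability at $p$ to bound the deviation of $q-p$ from the tangent space, and deduce a lower bound of the form $(1-o(1))r$; the uniform version is obtained in both cases by a compactness/uniform-continuity argument. The only cosmetic difference is that the paper works in a graphical chart with the remainder function $\alpha(z)$ and obtains the quadratic inequality $\abs{p+r\nu-q}^2\ge r^2(1-\alpha^2)$ via Cauchy--Schwarz, whereas you phrase the same tilt estimate coordinate-free via the orthogonal projection $\pi_p$ and the reverse triangle inequality.
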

\begin{proof}[Outline of the proof of Proposition~\ref{dist:prop}]  
First, for a given point $p\in\Gamma^k$ we consider a graphical chart defined by a vector-function $\varphi:U\to\mathbb R^{n-m}$, which is differentiable at a point $x\in U$ such that $p=(x,\varphi(x))$. Then there exists a vector-function $\alpha:U\to\mathbb R^{n-k}$ such that $\alpha(z)\to 0$ as $z\to x$ and the relation
\begin{equation}
\label{eq1}
\abs{p+r\nu-(z,\varphi(z))}^2\geqslant r^2(1-\alpha^2(z))
\end{equation}
holds for all $r>0$ and all unit normal vectors $\nu$ to $\Gamma^k$ at $p$. Indeed, the vector-function $\alpha$ is defined by the relation
$$
\varphi(z)=\varphi(x)+\nabla\varphi(x)\cdot (z-x)+\alpha(z)\abs{z-x},
$$
and inequality~\eqref{eq1} can be checked in a straightforward way by the repeated application of the Cauchy-Schwartz inequality. We proceed with the following claim.
\begin{claim}
\label{c2}
For any $\epsilon>0$ there exists $\delta>0$, which depends on $\varepsilon$, $p$, and $\Gamma^k$, such that the inequality
$$
\frac{1}{r^2}\dist(p+r\nu,\Gamma^k)^2\geqslant 1-\epsilon^2
$$
holds for all $0<r<\delta$ and all unit normal vectors $\nu$ to $\Gamma^k$ at $p$.
\end{claim}
\begin{proof}
First, in the notation above, for a given $\epsilon>0$ we may choose $\delta>0$ such that
$$
\abs{\alpha(z)}<\epsilon\qquad\text{for all}\quad\abs{z-x}<2\delta.
$$
Choosing $\delta>0$ even smaller, if necessary, we may assume that
\begin{equation}
\label{dist:def}
\dist(p+r\nu,\Gamma^k)=\inf_{z\in U}\abs{(x,\varphi(x))+rv-(z,\varphi(z))},
\end{equation}
for $0<r<\delta$. Since the distance $\dist(p+r\nu,\Gamma^k)$ is not greater than the distance between $p+r\nu$ and $p$, we may further assume that the infimum in~\eqref{dist:def} is taken over the subset 
$$
U_1=\{z\in U:\abs{(x,\varphi(x))+r\nu-(z,\varphi(z))}\leqslant r\}.
$$
Note that
$$
\abs{(x,\varphi(x))+r\nu-(z,\varphi(z))}\geqslant\abs{(x,\varphi(x))-(z,\varphi(z))}-r,
$$
and thus, we see that the subset $U_1$ lies in the greater subset $U_2$ that is defined by the relation
$$
U_2=\{z\in U: \abs{(x,\varphi(x))-(z,\varphi(z))}\leqslant 2r\}.
$$
Further, the subset $U_2$ lies in the subset
$$
U_3=\{z\in U: \abs{z-x}\leqslant 2r\},
$$
and thus, for a proof of the claim, it is sufficient to show that
$$
\abs{(x,\varphi(x))+r\nu-(z,\varphi(z))}^2\geqslant r^2(1-\epsilon^2)
$$
for all $z\in U_3$, all $0<r<\delta$, and all unit normal vectors $\nu$ to $\Gamma^k$ at $p$. The latter is a direct consequence of inequality~\eqref{eq1} with our choice of $\delta>0$.
\end{proof}
Note that we always have the inequality
$$
\frac{1}{r^2}\dist(p+r\nu,\Gamma^k)^2\leqslant 1
$$
for all $r>0$ and all unit normal vectors $\nu$. Combining it with Claim~\ref{c2}, we arrive at the first statement in Proposition~\ref{dist:prop}. 

For a proof of the second statement, we may proceed in a similar fashion. First, by a finite covering argument it is sufficient to consider the case when the compact set $K\subset\Gamma^k$ lies in the graph of a vector-function $\varphi$. Second, the function $\alpha$ used above should be viewed now as the function of two variables $(z,x)\in U\times U$. In more detail, if $\varphi$ is $C^1$-smooth, then the function
$$
\alpha(z,x)=\frac{1}{\abs{z-x}}\left(\varphi(z)-\varphi(x)-\nabla\varphi(x)\cdot(z-x)\right)
$$
is continuous on $U\times U$, and vanishes on the diagonal. In particular, for any $\epsilon>0$ we may find $\delta>0$ such that
$$
\abs{\alpha(z,x)}<\epsilon\qquad\text{for all}\quad (z,x)\in\Delta_\delta(K), 
$$
where $\Delta_\delta(K)$ is a finite union of balls in $U\times U$ of radii $\delta$ centred at points $(y,y)$ on the diagonal such that $y\in K$. In particular, we see that for any $(z,x)\in\Delta_\delta(K)$ there exists $y\in K$ such that 
$$
\abs{z-y}+\abs{y-x}<2\delta,
$$
and hence, $\abs{z-x}<2\delta$. Now  following the argument in the proof of Claim~\ref{c2}, we conclude that
$$
1-\epsilon^2\leqslant\frac{1}{r^2}\dist(p+r\nu,\Gamma^k)^2\leqslant 1
$$
for all $0<r<\delta$, all $p\in K$, and all unit normal vectors $\nu$ to $\Gamma^k$ at $p$. Thus, we are done.
\end{proof}

We end this discussion with recalling the notion of tangent cone to a subset $\Sigma$ in $\mathbb R^n$. Following~\cite{F69}, the {\em tangent cone} $\Tan(\Sigma, p)$ is the collection of vectors $w\in\mathbb R^n$ such that there exist sequences $(p_i)$, where $p_i\in\Sigma$, and $(\rho_i)$, where $\rho_i\in\mathbb R$, $\rho_i>0$ such that
$$
p_i\longrightarrow p\qquad\text{and}\qquad\rho_i(p_i-p)\longrightarrow w,
$$
as $i\to+\infty$. The tangent cone is a generalised cone in the sense that it is a closed scale-invariant subset $C\subset\mathbb R^n$, that is $\lambda C=C$ for any $\lambda>0$. As is well-known, and is straightforward to check, if $\Sigma$ is a $C^1$-smooth submanifold in $\mathbb R^n$, then for a point $p\in\Sigma$ the tangent cone $\Tan(\Sigma,p)$ is precisely the tangent space to $\Sigma$ at $p$. Further, if $\Sigma$ is a $C^1$-smooth submanifold with boundary $\Gamma$, and $p\in\Gamma$ is a boundary point, then $\Tan(\Sigma,p)$ coincides with the half-space $\Tan(\Gamma,p)\times \ell^+_p$, where $\ell^+_p$ is the ray spanned by the inward unit normal $\nu$ at $p$, that is $\ell^+_p=\{t\nu: t\geqslant 0 \}$. The hypothesis that $\Sigma$ above is $C^1$-smooth can be weakened. For example, one can assume that $\Sigma$ around a given point $p$ is the graph of a vector-function that is differentiable at $p$, see the discussion before Proposition~\ref{dist:prop}.

\subsection{Proof of Theorem~\ref{t1}}
Following~\cite{HL87}, the main idea is to compute the tangent cone of the minimal submanifold $\Sigma^m\subset\mathbf H^{n+1}$ at a limiting point $p$ from the ideal boundary. In the sequel it is more convenient to use the upper half-space model of the hyperbolic space, identifying it with the set
$$
\{(x,y): x\in\mathbb R^n, y\in\mathbb R, y>0\}.
$$ 
Then the ideal boundary $S^n_\infty(\mathbf H^{n+1})$ is naturally identified with the union $\{y=0\}\cup\{\infty\}$, and without loss of generality, we may assume that $\Sigma^m$ is asymptotic to a closed submanifold $\Gamma^{m-1}$ that lies in the space
\begin{equation}
\label{idb}
\{(x,0):x\in\mathbb R^n\}\simeq\mathbb R^n.
\end{equation}
In the sequel we view the latter as the Euclidean space, equipped with the corresponding distance function. We also denote by $B_r(x,0)$ Euclidean balls centred at points $(x,0)$ in the ideal boundary; the hemispheres $\partial B_r(x,0)\cap\{y>0\}$ are totally geodesic submanifolds in $\mathbf H^{n+1}$.
\begin{claim}
\label{c0}
Let $\Sigma^m\subset\mathbf H^{n+1}$ be a minimal submanifold that is asymptotic to a closed submanifold $\Gamma^{m-1}\subset\{y=0\}$. Then for any point $x\in\mathbb R^n$ and any $0<r<\dist(x,\Gamma^{m-1})$ the intersection $\Sigma^m\cap B_r(x,0)$ is empty.
\end{claim}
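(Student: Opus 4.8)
The plan is to argue by contradiction, using the maximum principle against the totally geodesic hemispheres already identified in the excerpt. Suppose the intersection $\Sigma^m\cap B_r(x,0)$ is nonempty for some $0<r<\dist(x,\Gamma^{m-1})$, and fix a radius $\rho$ with $r<\rho<\dist(x,\Gamma^{m-1})$. Then $\Sigma^m$ penetrates the bounded open half-ball $D=B_\rho(x,0)\cap\{y>0\}$, whose boundary inside $\mathbf H^{n+1}$ is the totally geodesic hemisphere $H=\partial B_\rho(x,0)\cap\{y>0\}$. The idea is to measure how deep $\Sigma^m$ reaches into $D$ and to contradict minimality at the deepest point, where $\Sigma^m$ is tangent to a \emph{strictly convex} equidistant hypersurface rather than to $H$ itself.

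Concretely, I would introduce the signed hyperbolic distance $\sigma$ to $H$, taken positive on the inner region $D$. Since $H$ is totally geodesic and all sectional curvatures of $\mathbf H^{n+1}$ equal $-1$, the standard comparison for the distance to a totally geodesic hypersurface gives, on $D$,
\[
\hess\sigma=\tanh(\sigma)\bigl(g-d\sigma\otimes d\sigma\bigr),
\]
so that the level sets $\{\sigma=t\}$ with $t>0$ are strictly convex. Restricting to the minimal submanifold $\Sigma^m$ and using that its mean curvature vector vanishes, I obtain
\[
\Delta_{\Sigma}\sigma=\trace_{T\Sigma}\hess\sigma=\tanh(\sigma)\bigl(m-\abs{\nabla^{\Sigma}\sigma}^2\bigr),
\]
where $\nabla^{\Sigma}\sigma$ is the tangential gradient. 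The crucial feature is strictness: at any point of $\Sigma^m$ where $\sigma>0$ and $\nabla^{\Sigma}\sigma=0$, the right-hand side equals $m\tanh(\sigma)>0$. Note that comparing directly with $H$ (where $\sigma=0$) would be useless, since there the Hessian degenerates and both sides of $H$ are convex.

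It then remains to locate the deepest penetration. Set $t^{*}=\sup_{\Sigma}\sigma$; by assumption $\Sigma^m$ enters $D$, so $t^{*}>0$. I claim $t^{*}$ is attained at an interior point $q^{*}\in\Sigma^m$. This is the heart of the argument and the place where the asymptotic hypothesis is essential: the superlevel set $\{\sigma\geqslant t^{*}/2\}$ is contained in the bounded half-ball $D\subset B_\rho(x,0)$, so its only possible limit points on $S^n_\infty(\mathbf H^{n+1})$ lie in the closed ideal disk $\overline{B_\rho(x,0)}\cap\{y=0\}$; since $\rho<\dist(x,\Gamma^{m-1})$, this disk is disjoint from $\Gamma^{m-1}$, while $\overline{\Sigma^m}\cap S^n_\infty(\mathbf H^{n+1})=\Gamma^{m-1}$. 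Hence $\Sigma^m\cap\{\sigma\geqslant t^{*}/2\}$ has no limit points on the ideal boundary and, being relatively closed and bounded away from it, is compact, so the supremum is achieved. At $q^{*}$ the interior maximum forces $\nabla^{\Sigma}\sigma=0$ and $\Delta_{\Sigma}\sigma\leqslant 0$, whereas the formula above gives $\Delta_{\Sigma}\sigma(q^{*})=m\tanh(t^{*})>0$, a contradiction.

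Therefore $\Sigma^m$ cannot enter $D=B_\rho(x,0)\cap\{y>0\}$ for any $\rho<\dist(x,\Gamma^{m-1})$; since $B_r(x,0)\subset B_\rho(x,0)$ for such $\rho>r$, this yields $\Sigma^m\cap B_r(x,0)=\emptyset$, as claimed. I expect the main obstacle to be precisely the attainment of the supremum at an interior point, that is, ruling out that the deepest excursion of $\Sigma^m$ into $D$ escapes toward the ideal boundary; this is exactly where the hypothesis that $\Sigma^m$ be asymptotic to $\Gamma^{m-1}$ (with $\Gamma^{m-1}$ disjoint from the relevant ideal ball) is used. The remaining ingredients, the Hessian comparison for the distance to a totally geodesic hypersurface and the trace identity for $\Delta_{\Sigma}$ on a minimal submanifold, are standard and can be quoted.
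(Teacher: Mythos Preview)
Your argument is correct. The paper, however, takes a much shorter route: it simply invokes Anderson's convex hull property for minimal submanifolds in $\mathbf H^{n+1}$, which says that $\Sigma^m$ must lie in the intersection of all closed hyperbolic half-spaces containing $\Gamma^{m-1}$; since the open half-ball $B_r(x,0)\cap\{y>0\}$ is the complement of one such half-space whenever $r<\dist(x,\Gamma^{m-1})$, the claim follows in one line. The paper then remarks that, in codimension one, a direct maximum-principle derivation is also available.

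What you have done is supply such a maximum-principle argument in full, and in a form valid in arbitrary codimension. Rather than comparing $\Sigma^m$ with the totally geodesic hemisphere $H$ directly (where the relevant Hessian degenerates), you work with the signed hyperbolic distance $\sigma$ to $H$ and exploit the strict convexity of the equidistant hypersurfaces via $\hess\sigma=\tanh(\sigma)(g-d\sigma\otimes d\sigma)$; tracing over $T\Sigma$ and using minimality yields $\Delta_\Sigma\sigma=m\tanh(\sigma)>0$ at an interior maximum, which is the contradiction. The compactness step you rightly flag as the crux is handled correctly: the asymptotic hypothesis together with $\rho<\dist(x,\Gamma^{m-1})$ forces the superlevel set $\Sigma^m\cap\{\sigma\geqslant t^*/2\}$ to stay away from the ideal boundary, so the supremum is attained (this implicitly uses that $\Sigma^m$ is properly immersed, an assumption equally needed for the paper's appeal to Anderson). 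Your route is longer but self-contained and makes clear that the maximum-principle approach is not restricted to codimension one; the paper's route is a one-line citation of a known result.
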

\begin{proof}
The statement is a consequence of the convex hull property of minimal submanifolds due to~\cite{An82}. It says that $\Sigma^m$ has to be contained in the convex hull of $\Gamma^{m-1}$, where the latter can be viewed as the intersection of all closed half-spaces in $\mathbf H^{n+1}$. In other words, if the closure of the totally geodesic half-space $B_r(x,0)$ does not contain $\Gamma^{m-1}$, then it does not contain any point of $\Sigma^m$. The statement can be also derived directly from the maximum principle, see~\cite{CM11, JT03}.
\end{proof}

By Proposition~\ref{dist:prop} we conclude that there exists $\rho_\Gamma>0$ such that 
$$
\frac{\delta(p,r)}{r}>\frac{1}{2}\qquad\text{for all}\quad p\in\Gamma\text{ ~and~ } 0<r<2\rho_\Gamma,
$$
and consider the following set
$$
W=\left(\mathbb R^n\times(0,\rho_\Gamma)\right)\backslash\left(\bigcup_{d(x)>2\rho_\Gamma} B_{2\rho_\Gamma}(x,0)\bigcup_{d(x)\leqslant 2\rho_\Gamma} B_{d(x)}(x,0)\right),
$$
where the distance $d(x)=\dist(x,\Gamma^{m-1})$ is the Euclidean distance on the space given by~\eqref{idb}. Note that by Claim~\ref{c0}  the piece of $\Sigma^m$ formed by points near the ideal boundary lies in the set $W$, that is
\begin{equation}
\label{in0}
\Sigma^m\cap\left(\mathbb R^n\times(0,\rho_\Gamma)\right)\subset W,
\end{equation}
and hence, the tangent cone $\Tan(\Sigma^m,(x,0))$ lies in $\Tan(W,(x,0))$ for any $x\in\Gamma^{m-1}$. Further, by the definition of $W$ the inclusion
\begin{equation}
\label{in1}
\Gamma^{m-1}\times (0,\rho_\Gamma)\subset W
\end{equation}
holds. The proof of the next claim follows closely the line of argument in~\cite{HL87}.
\begin{claim}
\label{c01}
Let $\{(x_i,y_i)\}$ be a sequence in $W$ such that $y_i\to 0+$ as $i\to+\infty$. Then $d(x_i)/y_i\to 0$, where $d(x)=\dist(x,\Gamma^{m-1})$ is the Euclidean distance in $\mathbb R^n$.
\end{claim}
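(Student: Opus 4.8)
The plan is to use the geodesic hemispheres $\partial B_r(x,0)$ removed in the definition of $W$ as barriers that confine any point $(x_i,y_i)\in W$ to lie essentially \emph{above} $\Gamma^{m-1}$. Fix $i$ with $d(x_i)>0$ (indices with $d(x_i)=0$ are trivial) and let $p_i\in\Gamma^{m-1}$ be a nearest point to $x_i$, so that $\abs{x_i-p_i}=d(x_i)$ and the unit vector $\nu_i=(x_i-p_i)/d(x_i)$ is normal to $\Gamma^{m-1}$ at $p_i$. I would first extract two elementary consequences of membership in $W$. Testing against the ball centred at $(x_i,0)$, the fact that $(x_i,y_i)\in W$ lies outside $B_{d(x_i)}(x_i,0)$ gives $y_i\geqslant d(x_i)$ whenever $d(x_i)\leqslant 2\rho_\Gamma$; and testing against $B_{2\rho_\Gamma}(x_i,0)$ rules out the alternative $d(x_i)>2\rho_\Gamma$, since that case would force $y_i\geqslant 2\rho_\Gamma$, contradicting $y_i<\rho_\Gamma$. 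Hence $d(x_i)\leqslant y_i\to 0$, and in particular $d(x_i)\to 0$; this a priori decay will be what licenses the optimisation below.

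The heart of the argument is a sharper barrier estimate along the normal ray through $x_i$. For $s>0$ set $q_i(s)=p_i+s\nu_i\in\mathbb R^n$; since $p_i\in\Gamma^{m-1}$ we have $d(q_i(s))\leqslant s$, so for $s<2\rho_\Gamma$ the ball $B_{d(q_i(s))}(q_i(s),0)$ is genuinely one of the balls removed in the definition of $W$. As $(x_i,y_i)\in W$ lies outside it, and $\abs{x_i-q_i(s)}=s-d(x_i)$ for $s\geqslant d(x_i)$, we obtain
$$
(s-d(x_i))^2+y_i^2\geqslant d(q_i(s))^2\geqslant\delta(p_i,s)^2 .
$$
Now I invoke the uniform statement of Proposition~\ref{dist:prop} on the compact set $K=\Gamma^{m-1}$: given $\epsilon>0$ there is $\rho>0$ with $\delta(p,s)\geqslant(1-\epsilon)s$ for all $p\in\Gamma^{m-1}$ and $0<s<\rho$. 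Substituting $\delta(p_i,s)\geqslant(1-\epsilon)s$ yields
$$
(s-d(x_i))^2+y_i^2\geqslant(1-\epsilon)^2 s^2\qquad\text{for } d(x_i)\leqslant s<\min(\rho,2\rho_\Gamma).
$$

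It remains to optimise the free parameter $s$. Writing $a=1-(1-\epsilon)^2$ and minimising the left-hand quadratic $a s^2-2d(x_i)s+d(x_i)^2+y_i^2$, the critical value $s^{\ast}=d(x_i)/a$ is admissible as soon as $d(x_i)$ is small enough that $s^{\ast}<\min(\rho,2\rho_\Gamma)$, which holds for all large $i$ precisely because $d(x_i)\to 0$. Evaluating the inequality at $s=s^{\ast}$ and simplifying gives
$$
\frac{d(x_i)^2}{y_i^2}\leqslant\frac{a}{1-a}=\frac{2\epsilon-\epsilon^2}{(1-\epsilon)^2},
$$
whose right-hand side tends to $0$ as $\epsilon\to 0+$. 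Since $\epsilon>0$ was arbitrary, this proves $d(x_i)/y_i\to 0$. I expect the main obstacle to be the bookkeeping of the admissible range of $s$: one must simultaneously keep the optimal radius $s^{\ast}$ below $\rho$, where the uniform lower bound for $\delta$ from Proposition~\ref{dist:prop} applies, and below $2\rho_\Gamma$, where the barrier ball $B_{d(q_i(s))}(q_i(s),0)$ is genuinely removed from $W$. Both constraints are secured only by the a priori fact $d(x_i)\to 0$ established at the outset, so the two halves of the argument are genuinely interlocked. A secondary point needing care is that $p_i$ be a true nearest point with $\nu_i$ normal to $\Gamma^{m-1}$, which relies on the $C^1$-regularity of $\Gamma^{m-1}$ together with its compactness in the half-space model.
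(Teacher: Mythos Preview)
Your argument is correct and shares the paper's core idea---use the removed balls $B_{d(x)}(x,0)$ centred at points $x=p_i+s\nu_i$ on the normal ray through $x_i$, together with the uniform estimate $\delta(p_i,s)\geqslant(1-\epsilon)s$ from Proposition~\ref{dist:prop}---but the execution differs. The paper first disposes of the subsequences with $y_i\geqslant d(x_i)^{3/4}$ by a direct estimate, and then on the complementary subsequence fixes the specific radius $r_i=d(x_i)+y_i^2/d(x_i)$, chosen so that the triangle with vertices $(p_i,0)$, $(x_i,y_i)$, $(p_i+r_i\nu_i,0)$ is right-angled at $(x_i,y_i)$; the case split is needed there precisely to force $r_i\to 0$. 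You avoid both the case split and the geometric construction: the preliminary barrier at $(x_i,0)$ already gives $d(x_i)\leqslant y_i$ and hence $d(x_i)\to 0$, and then treating $s$ as a free parameter and minimising the resulting quadratic yields the clean quantitative bound $d(x_i)^2/y_i^2\leqslant(2\epsilon-\epsilon^2)/(1-\epsilon)^2$. Your optimal $s^\ast=d(x_i)/a$ depends only on $d(x_i)$ and $\epsilon$, not on $y_i$, which is exactly why no case analysis is required. Both routes rest on the same compactness of $\Gamma^{m-1}$ (for the uniform version of Proposition~\ref{dist:prop}) and its $C^1$-regularity (so that the nearest-point vector $\nu_i$ is genuinely normal); your closing remarks correctly flag these.
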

\begin{proof}
First, note that by the definition of the set $W$ we always have $d(x_i)\to 0$ as long as $y_i\to 0+$ as $i\to+\infty$. If there is a subsequence $\{(x_{i_\ell},y_{i_\ell})\}$ such that $y_{i_\ell}\geqslant d(x_{i_\ell})^{3/4}$, then
$$
d(x_{i_\ell})/y_{i_\ell}\leqslant d(x_{i_\ell})^{1/4}\to 0.
$$
Thus, the statement is proved for such subsequences, and we may assume that $y_i<d(x_i)^{3/4}$ for all $i$ from now on.

For each $i$ denote by $p_i$ a point in $\Gamma^{m-1}$ such that $d(x_i)=\abs{x_i-p_i}$. Then there exist unit normal vectors $\nu_i$ such that $x_i=p_i+d(x_i)\nu_i$. We define
$$
r_i=d(x_i)+\frac{y_i^2}{d(x_i)}\qquad\text{for all}\quad i.
$$
First, since $y_i<d(x_i)^{3/4}$, we immediately see that
\begin{equation}
\label{ri}
r_i<d(x_i)+d(x_i)^{1/2}.
\end{equation}
Second, it is straightforward to check that the triangle $\Delta_i$ in $\mathbb R^{n+1}$ with the vertices $(p_i,0)$, $(x_i,y_i)$, and $(p_i+r_i\nu_i,0)$ has the right angle at $(x_i,y_i)$. Finally, denoting by $\delta_i$ the Euclidean distance between $(x_i,y_i)$ and $(p_i+r_i\nu_i,0)$ from similarity of triangles or by straightforward calculation, we obtain
$$
\frac{d(x_i)}{y_i}=\frac{\sqrt{r_i^2-\delta_i^2}}{\delta_i}.
$$
Thus, for a proof of the claim it is sufficient to show that the ratio $r_i/\delta_i$ converges to $1$. To prove the latter we note that the inequalities
$$
\delta(p_i,r_i)\leqslant d(p_i+r_i\nu_i)\leqslant\delta_i\leqslant r_i
$$
hold, where we used the definition of $W$ in the second inequality, and the fact that $\Delta_i$ is right-angled in the last. Re-arranging gives
$$
\frac{\delta(p_i,r_i)}{r_i}\leqslant\frac{\delta_i}{r_i}\leqslant 1,
$$
and since  $r_i\to0+$ as $i\to+\infty$ by inequality~\eqref{ri}, Proposition~\ref{dist:prop} now yields the claim.
\end{proof}

The last statement allows us to compute the tangent cone to the set $W$. More precisely, the following claim holds.
\begin{claim}
\label{c02}
For any point $x\in\Gamma^{m-1}$ the tangent cone $\Tan(W,(x,0))$ coincides with the product
$$
\Tan(W,(x,0))=\Tan(\Gamma^{m-1},x)\times [0,+\infty).
$$
\end{claim}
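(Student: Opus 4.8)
The plan is to prove the asserted equality by establishing the two inclusions separately, using the inclusion~\eqref{in1} for the containment $\supseteq$ and Claim~\ref{c01} for the containment $\subseteq$. For the first inclusion $\Tan(\Gamma^{m-1},x)\times[0,+\infty)\subset\Tan(W,(x,0))$, fix a vector $(u,t)$ with $u\in\Tan(\Gamma^{m-1},x)$ and $t\geqslant 0$, and choose sequences $p_i\in\Gamma^{m-1}$ and $\rho_i>0$ with $p_i\to x$ and $\rho_i(p_i-x)\to u$. I would then place the approximating points at height $y_i=t/\rho_i$ when $t>0$, and at height $y_i=\rho_i^{-2}$ when $t=0$, so that $(p_i,y_i)$ lies in $\Gamma^{m-1}\times(0,\rho_\Gamma)\subset W$ for large $i$ by~\eqref{in1}, while $\rho_i\bigl((p_i,y_i)-(x,0)\bigr)=(\rho_i(p_i-x),\rho_i y_i)\to(u,t)$. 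This exhibits $(u,t)$ as an element of $\Tan(W,(x,0))$.

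For the reverse inclusion $\Tan(W,(x,0))\subset\Tan(\Gamma^{m-1},x)\times[0,+\infty)$, take $(u,t)\in\Tan(W,(x,0))$ together with sequences $(x_i,y_i)\in W$ and $\rho_i>0$ satisfying $(x_i,y_i)\to(x,0)$ and $\rho_i\bigl((x_i,y_i)-(x,0)\bigr)\to(u,t)$. Since $W\subset\mathbb R^n\times(0,\rho_\Gamma)$ forces $y_i>0$, the inequality $t=\lim\rho_i y_i\geqslant 0$ is immediate, so the only real content is to show that $u\in\Tan(\Gamma^{m-1},x)$. Here I would let $p_i\in\Gamma^{m-1}$ be a nearest point to $x_i$, so that $\abs{x_i-p_i}=d(x_i)$; then $p_i\to x$ because $d(x_i)\leqslant\abs{x_i-x}\to 0$. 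Writing $\rho_i(p_i-x)=\rho_i(x_i-x)+\rho_i(p_i-x_i)$, the first summand tends to $u$, while the second has norm $\rho_i d(x_i)=(\rho_i y_i)\cdot(d(x_i)/y_i)$.

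The crux of the argument---and the one place where the geometry of $W$ is used---is the estimate of this last product. By Claim~\ref{c01}, the removal of the totally geodesic half-spaces in the definition of $W$ forces $d(x_i)/y_i\to 0$ for sequences in $W$ with $y_i\to 0+$, and since $\rho_i y_i$ is bounded (it converges to $t$), the second summand vanishes in the limit. Hence $\rho_i(p_i-x)\to u$ with $p_i\in\Gamma^{m-1}$ and $p_i\to x$, which is exactly the assertion that $u\in\Tan(\Gamma^{m-1},x)$. I expect this step---controlling the horizontal displacement of points of $W$ relative to their height through Claim~\ref{c01}---to be the main point, the remainder being bookkeeping with the definition of the tangent cone.
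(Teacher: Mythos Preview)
Your proposal is correct and follows essentially the same approach as the paper: the inclusion $\supseteq$ is derived from~\eqref{in1}, and the inclusion $\subseteq$ is obtained by projecting $x_i$ to a nearest point $p_i\in\Gamma^{m-1}$ and using Claim~\ref{c01} to kill the term $\rho_i(p_i-x_i)$ via the factorisation $\rho_i d(x_i)=(\rho_i y_i)\cdot d(x_i)/y_i$. Your write-up is slightly more explicit than the paper's for the easy inclusion (the paper simply declares it a direct consequence of~\eqref{in1}); the only cosmetic point is that your choice $y_i=t/\rho_i$ or $y_i=\rho_i^{-2}$ tacitly uses $\rho_i\to\infty$, which is automatic when $u\neq 0$ and can be arranged when $u=0$.
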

\begin{proof}
The inclusion
$$
\Tan(\Gamma^{m-1},x)\times [0,+\infty)\subset \Tan(W,(x,0))
$$
is a direct consequence of relation~\eqref{in1}. To prove the converse inclusion we start with sequences $\{(x_i,y_i)\}$ and $\{\rho_i\}$, where $(x_i,y_i)\in W$ and $\rho_i>0$, such that
$$
(x_i,y_i)\longrightarrow (x,0)\qquad\text{and}\qquad \rho_i((x_i,y_i)-(x,0))\longrightarrow (w,y)\in\Tan(W,(x,0)).
$$
In particular, the coordinate $y$ has to be non-negative, and it remains to show that the vector $w$ lies in the tangent cone $\Tan(\Gamma^{m-1},x)$. Consider the sequence $\{p_i\}$, where $p_i\in\Gamma^{m-1}$ is chosen such that $d(x_i)=\abs{x_i-p_i}$. We claim that
$$
\abs{p_i-x}\longrightarrow 0\qquad\text{and}\qquad\rho_i(p_i-x)\longrightarrow w,
$$
and thus, indeed $w$ lies in the tangent cone to $\Gamma^{m-1}$. The first limiting relation follows by triangle inequality together with the observation that $d(x_i)=\abs{x_i-p_i}\to 0$, see the definition of the set $W$. The second is a consequence of Claim~\ref{c01},
$$
\rho_i(p_i-x)=\rho_i(x_i-x)+\rho_i(p_i-x_i)=\rho_i(x_i-x)+\rho_iy_i\frac{d(x_i)}{y_i}
\longrightarrow w+0.
$$
Thus, we are done.
\end{proof}

Now combining relation~\eqref{in0} with Claim~\ref{c02}, we immediately obtain
\begin{equation}
\label{tan:in1}
\Tan(\Sigma^m,(x,0))\subset T_x\Gamma^{m-1}\times[0,+\infty).
\end{equation}
In particular, if $\Sigma^m\cup\Gamma^{m-1}$ is a $C^1$-smooth submanifold with boundary, then $\Sigma^m$ meets the space $\{y=0\}\subset S^n_\infty(\mathbf H^{n+1})$ orthogonally. This ends the proof of Theorem~\ref{t1}.
\qed

\subsection{Other results and extensions}
Let $\Sigma^m\subset\mathbf H^{n+1}$ be an arbitrary minimal submanifold asymptotic to a smooth submanifold $\Gamma^{m-1}\subset S^n_\infty(\mathbf H^{n+1})$. Note that the tangent cones $\Tan(\Sigma^m,(x,0))$ and $\Tan(\overline{\Sigma^m},(x,0))$ coincide, and since $\Gamma^{m-1}$ lies in the closure $\overline\Sigma^m$, together with~\eqref{tan:in1} we immediately arrive at the following relations
\begin{equation}
\label{tan:in2}
T_x\Gamma^{m-1}\times\{0\}\subset\Tan(\Sigma^m,(x,0))\subset T_x\Gamma^{m-1}\times[0,+\infty),
\end{equation}
where we view $\mathbf H^{n+1}$ as an upper half-space and assume $x\in\Gamma^{m-1}$. Removing  suitable regions from a subspace $\mathbf{H}^m\subset\mathbf{H}^{n+1}$, it is straightforward to construct a variety of examples of open minimal submanifolds $\Sigma^m$ such that the first inclusion in~\eqref{tan:in2} is an equality, as well as examples when both inclusions are strict.

The hypothesis that $\Sigma^m\cup\Gamma^{m-1}$ is $C^1$-smooth up to boundary, used in Theorem~\ref{t1} to ensure that the equality
\begin{equation}
\label{tcone}
\Tan(\Sigma^m,(x,0))=T_x\Gamma^{m-1}\times[0,+\infty)
\end{equation}
holds, can be weakened. For example, for a given point $x\in\Gamma^{m-1}$ it is sufficient to assume that around it the set $\Sigma^m\cup\Gamma^{m-1}$ is the graph of a vector-function that is differentiable at $x$, see the discussion after Proposition~\ref{dist:prop}. As the following result shows, it appears that in dimension two it is sufficient to ask only topological assumptions on $\Sigma^m\cup\Gamma^{m-1}$ for relation~\eqref{tcone} to hold.
\begin{theorem}
\label{ort:2}
Let $\Sigma^2\subset\mathbf H^{n+1}$ be a minimal surface asymptotic to an one-dimensional submanifold $\Gamma^{1}\subset S^n_\infty(\mathbf H^{n+1})$, and suppose that the union $\Sigma^2\cup\Gamma^{1}$ equipped with Euclidean topology is a topological surface with boundary. Then for any $x\in\Gamma^{1}$ the tangent cone $\Tan(\Sigma^2,(x,0))$ satisfies relation~\eqref{tcone}.
\end{theorem}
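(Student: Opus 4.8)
The plan is to promote the inclusion~\eqref{tan:in2} to the equality~\eqref{tcone} using only the topology of $\Sigma^2\cup\Gamma^1$ near $x$. Write $P=T_x\Gamma^1\times[0,+\infty)$ for the half-plane appearing in~\eqref{tan:in2}; the crucial feature of the dimension $m=2$ is that $P$ is genuinely two-dimensional, so its link $V=P\cap S^n$ is a single arc of the unit semicircle running from $+v$ to $-v$, where $\pm v$ span the line $T_x\Gamma^1$. By~\eqref{tan:in2} the cone $\Tan(\Sigma^2,(x,0))$ lies between $\partial P=T_x\Gamma^1\times\{0\}$ and $P$, so it is described by a closed set of angles $A\subset[0,\pi]$ containing $0$ and $\pi$, where $\theta\in[0,\pi]$ parametrises $V$ by $u(\theta)=\cos\theta\,v+\sin\theta\,e_y$ (here $e_y$ is the unit vertical direction). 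The goal thus becomes the purely angular statement $A=[0,\pi]$.

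First I would record an elementary avoidance principle: for any compact set $L\subset S^n$ disjoint from $\Tan(\Sigma^2,(x,0))\cap S^n$ there exist an open neighbourhood $\mathcal N_L\supset L$ and a radius $r_0>0$ such that no point $q\in\Sigma^2$ with $0<\abs{q-(x,0)}<r_0$ has its direction $(q-(x,0))/\abs{q-(x,0)}$ in $\mathcal N_L$; this follows at once from the definition of the tangent cone by contradiction and compactness. Taking $L=S^n\setminus\mathcal N$ for a thin tubular neighbourhood $\mathcal N$ of the arc $V$ confines all directions realised by $\Sigma^2$ near $(x,0)$ to $\mathcal N$. Now suppose, for contradiction, that $A\ne[0,\pi]$ and let $(\alpha,\beta)\subset(0,\pi)$ be a maximal gap; for small $\epsilon>0$ the closed sub-arc $V'=\{u(\theta):\theta\in[\alpha+\epsilon,\beta-\epsilon]\}$ is disjoint from $\Tan\cap S^n$. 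Applying the avoidance principle to $V'$ yields a neighbourhood containing a $\rho$-tube around $V'$; choosing $\mathcal N$ thinner than $\rho$ (and shrinking $r_0$ accordingly) guarantees that $\Sigma^2$ avoids the whole cross-sectional slab $\{u\in\mathcal N:\theta(u)\in[\alpha+\epsilon,\beta-\epsilon]\}$ near $(x,0)$, where $\theta(u)$ is the angle of the nearest-point projection of $u$ onto $V$. Consequently the directions realised by $\Sigma^2$ near $(x,0)$ lie in $\mathcal N$ minus this slab, a set split by continuity of $\theta$ into an open piece $\{\theta<\alpha+\epsilon\}$ about $+v$ and an open piece $\{\theta>\beta-\epsilon\}$ about $-v$.

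The topological hypothesis then forces a contradiction. Near $x$ a homeomorphism $h$ from a half-disk $D^+$ onto a neighbourhood in $\Sigma^2\cup\Gamma^1$ sends the diameter onto $\Gamma^1$, the upper part onto $\Sigma^2$, and $h(0)=(x,0)$. For small $s$ the concentric semicircle of radius $s$ in $D^+$ maps to a connected curve $c_s$ whose interior lies in $\Sigma^2$ and whose two ends approach $\Gamma^1$ on opposite sides of $x$, so that the directions along $c_s$ tend to $+v$ at one end and to $-v$ at the other. For $s$ small enough $c_s\subset B_{r_0}(x,0)$, and the image of its interior under the direction map is then a connected subset of $\mathcal N$ minus the slab which nonetheless meets both separated pieces, an impossibility. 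Hence no gap exists, $A=[0,\pi]$, and $\Tan(\Sigma^2,(x,0))=P$, which is precisely~\eqref{tcone}.

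I expect the main obstacle to be the second paragraph: converting a mere gap in the angular support of the tangent cone into a genuine solid slab avoided by $\Sigma^2$, and verifying that this slab disconnects the thin tube $\mathcal N$ into the two required pieces. This is exactly the step that pins the argument to $m=2$: the link $V$ is an arc, whose separation by a sub-arc is elementary and whose two sides can be distinguished by a connected curve, whereas for $m>2$ the link is a hemisphere in which a missing interior region need not disconnect it, so the concluding connectedness argument has no direct analogue.
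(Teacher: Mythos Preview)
Your proof is correct and takes a genuinely different route from the paper's. Both start from the inclusions~\eqref{tan:in2}, but the paper argues \emph{directly}: for each $w\in P$ it slices $\Sigma^2$ by the affine hyperplane through $(x,0)$ spanned by $w$ and the normal space $N_x\Gamma^1$; connectedness of a small neighbourhood in $\Sigma^2\cup\Gamma^1$ together with the fact that $\Gamma^1$ crosses this hyperplane forces the slice to accumulate at $(x,0)$, and then a second appeal to the decay estimate of Claim~\ref{c01} shows the directions along the slice converge to $w$. Your argument is \emph{indirect}: a gap in the angular support $A$ together with the avoidance principle confines the realised directions to two disjoint pieces of the tube around the arc $V$, and a connected arc $c_s$ joining the two sides of $\Gamma^1$ yields the contradiction. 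The paper's method is constructive and recycles the geometric estimate of Claim~\ref{c01}; yours is purely topological once~\eqref{tan:in2} is in hand and makes the role of $m=2$ transparent, exactly as you note, since the link $V$ is an arc and any interior sub-arc separates it. One detail worth making explicit in your write-up: the endpoints of $c_s$ lie on $\Gamma^1$ rather than on $\Sigma^2$, so the avoidance principle does not cover them; but for small $s$ their directions are close to $\pm v$ and hence lie in the two respective components of the split tube, and since the direction map is continuous on the whole closed curve $c_s\subset\mathbb R^{n+1}\setminus\{(x,0)\}$, the connectedness argument goes through as stated.
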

\begin{proof}
By the second inclusion in~\eqref{tan:in2} it is sufficient to show that the product $T_x\Gamma^{1}\times[0,+\infty)$ lies in the tangent cone $\Tan(\Sigma^2,(x,0))$. Besides, by the first inclusion in~\eqref{tan:in2} we know so does the cone $T_x\Gamma^{1}\times\{0\}$. Now we show that the cone $\{0\}\times [0,+\infty)$ also lies in $\Tan(\Sigma^2,(x,0))$.

For a given $x\in\Gamma^{1}$ let $N_x$ be the subspace normal to $\Gamma^{1}$ at $x$ in $\mathbb R^n$, and denote by $V$ the vector subspace in $\mathbb R^{n+1}$ spanned by $N_x$ and the vector $(0,1)$. First, we claim that the intersection of $\Sigma^2$ with the affine subspace $(x,0)+V$ in $\mathbb R^{n+1}$ is non-empty.
Indeed, note that $(x,0)+V$ is a hyperspace in $\mathbb R^{n+1}$, and thus, separates $\mathbb R^{n+1}$ in two half-spaces. If $\Sigma^2$ does not intersect the affine space $(x,0)+V$, then so does not a neighbourhood $\Sigma^2\cap B_r((x,0))$. Choosing a sufficiently small $r$, by the hypotheses of the theorem, we may assume that $\Sigma^2\cap B_r((x,0))$ is connected, and hence, its closure lies in one of the closed half-spaces. However, the arc $\Gamma^1\cap B_r(x)$ lies in the closure of $\Sigma^2\cap B_r(x,0)$, and since it is orthogonal to $(x,0)+V$, it is separated by it. Thus, there are non-trivial arcs of $\Gamma^1$ that lie in each of the half-spaces, and we arrive at a contradiction. 

The argument above also shows that the intersection $\Sigma^2\cap ((x,0)+V)$ contains $(x,0)$ among its limit points. In particular, there exists a sequence $(x_i,y_i)$ from this intersection that converges to $(x,0)$. Since it lies in the set $W$, the argument in the proof of Claim~\ref{c01} shows that $\abs{x-x_i}/y_i$ converges to zero. Now we conclude that 
$$
\frac{1}{y_i}\left((x_i,y_i)-(x,0)\right)\longrightarrow (0,1),
$$
as $i\to+\infty$, and hence, the whole ray $\{0\}\times [0,+\infty)$ lies in the tangent cone $\Tan(\Sigma^2,(x,0))$.

Arguing in a similar fashion, one can also show that for any vector $w\in T_x\Gamma\times [0,+\infty)$ the cone $\{tw: t\geqslant 0\}$ lies in the tangent cone $\Tan(\Sigma^2,(x,0))$. In more detail, the intersection of $\Sigma^2$ with the subspace $(x,0)+V$, where now we denote by $V$ the subspace spanned by $w$ and $N_x$, contains $(x,0)$ among its limit points, that is there is a sequence $(x_i,y_i)$ that converges to $(x,0)$. Writing it in in the form
$$
(x_i,y_i)=(x,0)+\alpha_i w+\beta_i\nu_i,
$$ 
where $\nu_i\in N_x$ is a unit normal vector, we see that the argument in the proof of Claim~\ref{c01} yields the convergence of $\abs{\beta_i}/y_i$ to zero. Note that, if $w_0>0$ is the Euclidean dot product of $w$ and the vector $(0,1)$, then $y_i=\alpha_iw_0$. By the last relation, we obtain
$$
\frac{w_0}{y_i}\left((x_i,y_i)-(x,0)\right)=w+w_0\frac{\beta_i}{y_i}\nu_i\longrightarrow w,
$$
as $i\to+\infty$. Thus, the theorem is proved.
\end{proof}

\section{An estimate for the bottom of essential spectrum}
\label{mckean}

\subsection{An inequality for the Cheeger constant}
Now let $M$ be a complete non-compact Riemannian manifold, and $\Sigma^m$ a submanifold in $M$. For a geodesic ball $B_r=B_r(p)$ in $M$ we define the quantity
$$
\epsilon_r(p)=\sup\{\abs{H_x}: x\in\Sigma^m\backslash B_r(p)\},
$$
where $H_x$ is the mean curvature vector of $\Sigma^m$. If the complement $\Sigma^m\backslash B_r(p)$ is empty, we set $\epsilon_r(p)$ to be $+\infty$. Following Definition~\ref{am}, we call  a submanifold $\Sigma^m$ {\em asymptotically minimal} if $\epsilon_r(p)\to 0$ as $r\to +\infty$ for some, and hence any, point $p\in\mathbf{H}^{n+1}$. Further, for any domain $D\subset\Sigma^m$ by its {\em isoperimetric constant} $h(D)$ we call the quantity
$$
h(D)=\inf\{\mathit{Area}(\partial\Omega)/\mathit{Vol}(\Omega): \bar\Omega\subset D\},
$$
where $\Omega$ ranges over all open submanifolds of $D$ with compact closure and whose boundary is smooth. Note that when $D$ is a domain in the hyperbolic space $\mathbf H^m$, using the classical isoperimetric inequality, it is straightforward to show that the isoperimetric constant satisfies the bound
\begin{equation}
\label{iso:hm}
h(D)>m-1,
\end{equation} 
see the discussion in~\cite{Ko23}. We proceed with the following estimate for the isoperimetric constant on annuli.
\begin{theorem}
\label{tc}
Let $M$ be a complete simply connected Riemannian manifold whose sectional curvatures are not greater than $-1$, and let $\Sigma^m$ be a submanifold in $M$. Then for any concentric balls $B_r(p)\subset B_R(p)$ in $M$  the inequality
$$
h(\Sigma^m\cap(B_R(p)\backslash B_r(p)))\geqslant h(\mathbf B^m_R)-\epsilon_r(p)
$$
holds, where $\mathbf B^m_R$ is a ball in the hyperbolic space $\mathbf H^m$. In particular,  the isoperimetric constant of the complement of a ball $B_r(p)$ in $\Sigma^m$ satisfies the inequality
$$
h(\Sigma^m\backslash B_r(p))\geqslant m-1-\epsilon_r(p)
$$
\end{theorem}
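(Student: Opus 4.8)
The plan is to deduce the first (stronger) inequality from a pointwise lower bound on the Laplacian of the distance function, transported to $\Sigma^m$ by a radial calibration, and then to obtain the second inequality by exhaustion. Write $\rho=\dist(p,\cdot)$. Since $M$ is complete, simply connected and has sectional curvatures $\le-1$, it is a Cartan--Hadamard manifold, so $\rho$ is smooth on $M\setminus\{p\}$ and the Hessian comparison theorem holds on $\{\rho>0\}$, giving $\hess\rho(X,X)\ge\coth(\rho)\bigl(\abs{X}^2-\langle X,\nabla\rho\rangle^2\bigr)$ for every $X$. Tracing over an orthonormal frame of $T\Sigma^m$ yields $\trace_\Sigma\hess\rho\ge\coth(\rho)\bigl(m-\abs{\nabla^\Sigma\rho}^2\bigr)$, where $\nabla^\Sigma\rho$ is the tangential part of $\nabla\rho$. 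Together with the identity $\Delta_\Sigma\rho=\trace_\Sigma\hess\rho+\langle\nabla\rho,H\rangle$ and $\abs{\langle\nabla\rho,H\rangle}\le\abs{H}\le\epsilon_r(p)$ on $\Sigma^m\setminus B_r(p)$, this gives the key estimate
\[
\Delta_\Sigma\rho\ge\coth(\rho)\bigl(m-\abs{\nabla^\Sigma\rho}^2\bigr)-\epsilon_r(p)\qquad\text{on }\Sigma^m\cap(B_R\setminus B_r),
\]
where it is essential to keep the factor $m-\abs{\nabla^\Sigma\rho}^2$ intact rather than to bound it by $m-1$.

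Next I set $h_R=h(\mathbf B^m_R)$, which by the isoperimetric inequality in $\mathbf H^m$ equals $\mathit{Area}(\partial\mathbf B^m_R)/\mathit{Vol}(\mathbf B^m_R)=\sinh^{m-1}(R)/\int_0^R\sinh^{m-1}$, and take $g$ to be the solution on $[0,R]$ of $g'+(m-1)\coth(\rho)\,g=h_R$ that is regular at the origin, that is
\[
g(\rho)=h_R\,\frac{\int_0^\rho\sinh^{m-1}(t)\,dt}{\sinh^{m-1}(\rho)}.
\]
Then $g(R)=1$, and $0\le g\le1$ on $[0,R]$, since $g(\rho)\le1$ amounts to $h_R\le\sinh^{m-1}(\rho)/\int_0^\rho\sinh^{m-1}=h(\mathbf B^m_\rho)$, which holds because the isoperimetric ratio of balls decreases with the radius. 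The property I shall use is the differential inequality $g'<g\coth\rho$ on $(0,R]$: as $g'=h_R-(m-1)\coth(\rho)\,g$, it is equivalent to $g>h_R\tanh(\rho)/m$, i.e. to $\sinh^m(\rho)<m\cosh(\rho)\int_0^\rho\sinh^{m-1}$, which is elementary, both sides vanishing at $0$ while the derivative of their difference equals $m\sinh(\rho)\int_0^\rho\sinh^{m-1}>0$.

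Now fix an open $\Omega$ with $\overline\Omega\subset\Sigma^m\cap(B_R\setminus B_r)$ and smooth boundary, and apply the divergence theorem on $\Sigma^m$ to the tangential field $X=g(\rho)\nabla^\Sigma\rho$. Writing $s=\abs{\nabla^\Sigma\rho}^2\in[0,1]$ and using $\operatorname{div}_\Sigma X=g'(\rho)s+g(\rho)\Delta_\Sigma\rho$, the estimate above, and $g\ge0$, I obtain
\[
\operatorname{div}_\Sigma X\ge s\bigl(g'-g\coth\rho\bigr)+m\,g\coth\rho-g\,\epsilon_r(p).
\]
By the previous step the coefficient $g'-g\coth\rho$ is negative, so the right-hand side decreases in $s$ and is minimised at $s=1$, where it equals $g'+(m-1)g\coth\rho-g\,\epsilon_r(p)=h_R-g\,\epsilon_r(p)\ge h_R-\epsilon_r(p)$, using the equation for $g$ and $0\le g\le1$. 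Hence $\operatorname{div}_\Sigma X\ge h_R-\epsilon_r(p)$ on $\Omega$, while $\abs{X}=g\abs{\nabla^\Sigma\rho}\le1$ gives $\int_{\partial\Omega}\langle X,\eta\rangle\le\mathit{Area}(\partial\Omega)$; integrating yields $(h_R-\epsilon_r(p))\mathit{Vol}(\Omega)\le\mathit{Area}(\partial\Omega)$, and the infimum over $\Omega$ proves the first inequality. The second follows immediately, since any admissible $\Omega\subset\Sigma^m\setminus B_r$ lies in some annulus $\Sigma^m\cap(B_R\setminus B_r)$ and hence has isoperimetric ratio at least $h(\mathbf B^m_R)-\epsilon_r(p)>m-1-\epsilon_r(p)$ by~\eqref{iso:hm}.

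I expect the main difficulty to be reaching the sharp constant $h(\mathbf B^m_R)$ instead of the weaker $(m-1)\coth R$ returned by the naive calibration in which $\abs{\nabla^\Sigma\rho}^2$ is replaced by $1$; a direct computation shows $(m-1)\coth R<h(\mathbf B^m_R)$, and no monotone choice of $g$ with $g\le1$ improves on $(m-1)\coth R$. The extra gain must therefore come from retaining the term $m-\abs{\nabla^\Sigma\rho}^2$ in the Hessian trace and from the sign condition $g'<g\coth\rho$, which is exactly what turns the otherwise uncontrolled quantity $\abs{\nabla^\Sigma\rho}^2$ to our advantage at the extreme value $s=1$. Establishing this sign condition for the model calibration is the technical heart of the argument.
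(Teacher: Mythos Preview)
Your argument is correct and is essentially the paper's own proof: your calibrating function $g=h_R f'$ is the normalisation of the paper's $f'(t)=\int_0^t\sinh^{m-1}/\sinh^{m-1}(t)$, your ODE $g'+(m-1)\coth\rho\,g=h_R$ is the rescaling of the paper's $f''+(m-1)\coth(t)f'=1$, and the sign condition $g'<g\coth\rho$ you isolate is exactly the non-negativity of $f'\coth-f''=m\coth(t)f'-1$ that the paper invokes (via \cite{Ko21}) to discard the factor $(1-\abs{\nabla r}^2)$. The only cosmetic difference is that you integrate the divergence of the vector field $g(\rho)\nabla^\Sigma\rho$ directly, whereas the paper integrates the Laplacian of the primitive $\psi=f\circ r$; these are the same computation up to the constant factor $h_R$.
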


As a consequence of Theorem~\ref{tc}, we obtain the following lower bound for the bottom of the essential spectrum of the Laplace operator on asymptotically minimal submanifolds.
\begin{corollary}
\label{ess}
Let $M$ be a complete simply connected Riemannian manifold whose sectional curvatures are not greater than $-1$, and let $\Sigma^m$ be a proper asymptotically minimal submanifold of $M$, possibly with boundary. Then the bottom of the essential spectrum $\lambda_{\mathit{ess}}(\Sigma^m)$ of the Laplace operator on $\Sigma^m$ satisfies the inequality
$$
\lambda_{\mathit{ess}}(\Sigma^m)\geqslant\frac{1}{4}(m-1)^2.
$$
\end{corollary}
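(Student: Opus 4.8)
The plan is to combine the isoperimetric lower bound of Theorem~\ref{tc} with two standard tools: the Cheeger inequality, which bounds the bottom of the Dirichlet spectrum of a domain from below by the square of its isoperimetric constant, and a Persson-type characterisation of the bottom of the essential spectrum in terms of exterior domains. First I would recall the decomposition principle for the Friedrichs extension: the bottom of the essential spectrum admits the description
\[
\lambda_{\mathit{ess}}(\Sigma^m)=\sup_{K}\,\lambda_0\bigl(\Sigma^m\backslash K\bigr)=\lim_{r\to+\infty}\lambda_0\bigl(\Sigma^m\backslash \bar B_r(p)\bigr),
\]
where $K$ ranges over compact subsets of $\Sigma^m$, and $\lambda_0(\Sigma^m\backslash K)$ denotes the infimum of the Rayleigh quotient $\int\abs{\grad f}^2/\int f^2$ taken over functions $f\in C_c^\infty(\Sigma^m\backslash K)$. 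The second equality holds since the exterior domains $\Sigma^m\backslash\bar B_r(p)$ are nested and exhaust the complements of compact sets as $r\to+\infty$, so that $\lambda_0(\Sigma^m\backslash\bar B_r(p))$ is non-decreasing in $r$. This reduces the statement to a uniform lower bound on the quantities $\lambda_0(\Sigma^m\backslash\bar B_r(p))$.

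Next I would apply the Cheeger inequality to each exterior domain $D_r=\Sigma^m\backslash\bar B_r(p)$. For any non-negative $f\in C_c^\infty(D_r)$, applying the coarea formula to $f^2$ together with the definition of the isoperimetric constant $h(D_r)$, and then the Cauchy-Schwarz inequality, yields
\[
h(D_r)\int_{D_r}f^2\leqslant\int_{D_r}\abs{\grad (f^2)}\leqslant 2\Bigl(\int_{D_r}f^2\Bigr)^{1/2}\Bigl(\int_{D_r}\abs{\grad f}^2\Bigr)^{1/2},
\]
and hence $\lambda_0(D_r)\geqslant h(D_r)^2/4$; the reduction to non-negative $f$ is standard, replacing $f$ by $\abs{f}$. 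Combining this with the second inequality of Theorem~\ref{tc}, which gives $h(D_r)\geqslant m-1-\epsilon_r(p)$, I obtain, for all $r$ large enough that the right-hand side is non-negative,
\[
\lambda_0\bigl(\Sigma^m\backslash\bar B_r(p)\bigr)\geqslant\tfrac14\bigl(m-1-\epsilon_r(p)\bigr)^2.
\]

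Finally, since $\Sigma^m$ is asymptotically minimal, the quantity $\epsilon_r(p)$ tends to zero as $r\to+\infty$ by Definition~\ref{am}. Passing to the limit $r\to+\infty$ in the last display and using the Persson characterisation recalled above then gives $\lambda_{\mathit{ess}}(\Sigma^m)\geqslant\tfrac14(m-1)^2$, as required. The main technical point to secure is the Persson characterisation of $\lambda_{\mathit{ess}}$ in the present generality, where $\Sigma^m$ need not be complete and the operator is the Friedrichs extension from $C_c^\infty(\Sigma^m)$; this is where one must invoke the background material of~\cite{BD,Ko23}, since it is precisely this reduction to the exterior domains that converts the geometric estimate of Theorem~\ref{tc} into a spectral bound. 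The remaining steps, namely the Cheeger inequality and the passage to the limit, are routine.
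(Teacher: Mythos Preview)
Your proposal is correct and follows essentially the same route as the paper: Cheeger's inequality applied to exterior domains, the isoperimetric lower bound of Theorem~\ref{tc}, and the decomposition/Persson principle to pass from $\lambda_0$ of exterior domains to $\lambda_{\mathit{ess}}(\Sigma^m)$, followed by letting $r\to+\infty$. The only cosmetic difference is that the paper first applies Cheeger's inequality on the bounded annuli $\Sigma^m\cap(B_R(p)\backslash B_r(p))$ and then lets $R\to+\infty$ via monotonicity of $\lambda_0$, whereas you invoke the second inequality of Theorem~\ref{tc} directly on the unbounded exterior; both are equivalent.
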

\begin{proof}
For an open subset $D\subset\Sigma^m$ by $\lambda_0(D)$ below we denote the bottom of the Dirichlet spectrum of the Laplace operator. First, by Cheeger's inequality, see~\cite{Cha}, for sufficiently large both $R>r$ we obtain
$$
\lambda_0(\Sigma^m\cap(B_R(p)\backslash B_r(p)))\geqslant\frac{1}{4}h(\Sigma^m\cap(B_R(p)\backslash B_r(p)))^2\geqslant \frac{1}{4}(m-1-\epsilon_r(p))^2,
$$
where in the second inequality we used Theorem~\ref{tc}, and the choice of $r$ so that $\epsilon_r(p)$ is not greater than $m-1$. By standard monotonicity of $\lambda_0$, we may pass to the limit as $R\to+\infty$, and arrive at the inequality
$$
\lambda_0(\Sigma^m\backslash B_r(p))\geqslant \frac{1}{4}(m-1-\epsilon_r(p))^2.
$$
Now by the so-called decomposition principle, see~\cite{DL79,Gla66}, the essential spectra of $\Sigma^m$ and $\Sigma^m\backslash B_r(p)$, where the latter is equipped with the Dirichlet boundary conditions, coincide, and hence, we have the following bound
$$
\lambda_{\mathit{ess}}(\Sigma^m)\geqslant\lambda_0(\Sigma^m\backslash B_r(p))\geqslant\frac{1}{4}(m-1-\epsilon_r(p))^2.
$$
Since $\epsilon_r(p)\to 0+$ as $r\to+\infty$, passing to the limit in the last inequality,  we immediately arrive at the inequality in the statement.
\end{proof}

\subsection{Proof of Theorem~\ref{tc}}
We follow closely the line of argument in~\cite{Ko23}, see also~\cite{Ko21,LMMV}. First, denote by $r(x)$ the distance function $\dist(p,x)$ in $M$ restricted to a given submanifold $\Sigma^m$. The standard comparison argument, see the proof of~\cite[Lemma~2.1]{Ko21} yields the inequality
$$
-\Delta r(x)\geqslant\coth(r(x))(m-\abs{\nabla r})^2-\abs{H_x},
$$
where $H_x$ is the mean curvature vector at a point $x\in\Sigma^m$, and the sign convention for the Laplacian $\Delta$ on $\Sigma^m$ is chosen so that it is a non-negative operator. As a consequence of this inequality, we see that,  if $r(x)>r$ for a fixed $r>0$, then
\begin{equation}
\label{lap:r}
-\Delta r(x)\geqslant\coth(r(x))(m-\abs{\nabla r})^2-\epsilon_r,
\end{equation}
where $\epsilon_r=\epsilon_r(p)$. Now consider the function
$$
f(t)=\int\limits_0^t\left(\left(\int_0^R\sinh^{m-1}(s)ds\right)/\sinh^{m-1}(R)\right)dR,
$$
defined for $t>0$. A straightforward calculation, using for example~\cite[Lemma~2.5]{Ko21}, shows that it is convex, and satisfies the relation
\begin{equation}
\label{g:in}
f''(t)+(m-1)\coth(t)f'(t)=1.
\end{equation}
Note that the value $1/f'(R)$ is the isoperimetric ratio of the ball $\mathbf B_R^m$ in the hyperbolic space $\mathbf H^m$, and by the classical isoperimetric inequality, coincides with the value of $h(\mathbf{B}^m_R)$. Thus, for a proof of Theorem~\ref{tc} it is sufficient to show that for any admissable open submanifold $\Omega$ in $\Sigma^m\cap (B_R(p)\backslash B_r(p))$ the relation
\begin{equation}
\label{punch}
\frac{\mathit{Area}(\partial\Omega)}{\mathit{Vol}(\Omega)}\geqslant \frac{1}{f'(R)}-\epsilon_r(p)
\end{equation}
holds for all $R>r$. To see this, denote by $\psi$ the function $f\circ r$ defined on the complement $\Sigma^m\backslash B_r(p)$, where $r(x)=\dist(p,x)$. Computing the Laplacian of $\psi$, we obtain
\begin{multline*}
-\Delta\psi=f''(t)\abs{\nabla r}^2-f'(t)\Delta r\geqslant f''(t)\abs{\nabla r}^2+f'(t)\coth(t)(n-\abs{\nabla r}^2)-\epsilon_rf'(t)\\=1+(1-\abs{\nabla r}^2)\left(f'(t)\coth(r)-f''(t)\right)-\epsilon_rf'(t),
\end{multline*}
where $t=r(x)$ and we used relations~\eqref{lap:r}--\eqref{g:in}. Further, using~\cite[Lemma~2.5]{Ko21}, it is straightforward to check that the quantity
$$
f'(t)\coth(t)-f''(t)=m\coth(t)f'(t)-1
$$
is non-negative, and we conclude that 
$$
-\Delta\psi\geqslant 1-\epsilon_rf'(t).
$$
Integrating the last inequality over an admissible open submanifold $\Omega\subset\Sigma^m$ that lies in the annulus $B_R(p)\backslash B_r(p)$, we obtain
\begin{multline*}
(1-\epsilon_rf'(R))\mathit{Vol}(\Omega)\leqslant \int_\Omega (1-\epsilon_rf')d\mathit{Vol}\leqslant -\int_\Omega\Delta\psi d\mathit{Vol}\\ =\int_{\partial\Omega}\langle\grad\psi,\nu\rangle dA\leqslant\int_{\partial\Omega}f'\abs{\nabla r}dA\leqslant f'(R)\mathit{Area}(\partial\Omega),
\end{multline*}
where $\nu$ is a unit normal vector, and we used that $f'>0$ is increasing in the first and the last inequalities above. After elementary transformations, we arrive at inequality~\eqref{punch}.
\qed

\section{Examples of asymptotically minimal submanifolds}
\label{exas}
\subsection{Relationships with $C^2$-regularity}
The purpose of this section is to prove the following statement, which provides a plethora of examples of asymptotically minimal submanifolds. The argument uses the idea reminiscent to one in the proof of~\cite[Lemma~5]{LMMV}.

\begin{prop}
\label{pam}
Let $\Sigma^m$ be a submanifold in the hyperbolic space $\mathbf{H}^{n+1}$ that extends to a $C^2$-smooth submanifold with boundary $\Gamma^{m-1}\subset S^n_\infty(\mathbf{H}^{n+1})$. Then $\Sigma^{m}$ is an asymptotically minimal submanifold of $\mathbf{H}^{n+1}$.
\end{prop}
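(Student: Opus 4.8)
The plan is to pass to Euclidean data via the conformal relation $g_{\mathbf H}=y^{-2}g_E$ and to let the degeneration of the conformal factor at the ideal boundary produce the decay. I would use the upper half-space model with $\Gamma^{m-1}\subset\{y=0\}$, exactly as in the proof of Theorem~\ref{t1}. Since $\Gamma^{m-1}$ is compact and $\Sigma^m$ is asymptotic to it, the part of $\Sigma^m$ lying outside a hyperbolic ball $\mathbf{B}_R$ is contained, for large $R$, in a thin Euclidean collar $\{0<y<\delta_R\}$ about $\Gamma^{m-1}$ with $\delta_R\to 0$; indeed, the hyperbolic distance from the reference point to $(x,y)$ with $x$ ranging over a bounded set grows like $\log(1/y)$. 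Hence the supremum in Definition~\ref{am} is really a supremum over a shrinking collar of $\Gamma^{m-1}$, and it suffices to estimate $\abs{H_x}$ there.

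The heart of the matter is the transformation law for the mean curvature vector under the conformal change $g_{\mathbf H}=e^{2\phi}g_E$ with $\phi=-\log y$. Writing $H_E$ for the Euclidean mean curvature vector of $\Sigma^m$ and $(\,\cdot\,)^\perp$ for the projection onto the normal bundle (which is the same for both metrics, conformal metrics sharing orthogonality), the identity $\tilde\nabla_XY=\nabla_XY+X(\phi)Y+Y(\phi)X-g_E(X,Y)\nabla^E\phi$ for the Levi-Civita connection of a conformal metric yields $H_x=e^{-2\phi}\bigl(H_E-m(\nabla^E\phi)^\perp\bigr)$. Since $\nabla^E\phi=-y^{-1}\partial_y$, and $\abs{v}_{\mathbf H}=y^{-1}\abs{v}_E$ for any vector $v$, passing to hyperbolic norms gives
$$
\abs{H_x}=\abs{yH_E+m(\partial_y)^\perp}_E\leqslant y\abs{H_E}_E+m\abs{(\partial_y)^\perp}_E .
$$
I would either derive this in a line from the connection formula or quote it as standard. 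The first term is easily dispatched: the hypothesis that $\Sigma^m\cup\Gamma^{m-1}$ is a $C^2$-smooth submanifold with boundary means its Euclidean second fundamental form extends continuously to the compact set $\Gamma^{m-1}$, so $\abs{H_E}_E$ is bounded on a neighbourhood of $\Gamma^{m-1}$; multiplying by $y\to 0$ makes $y\abs{H_E}_E\to 0$ uniformly over the collar.

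The main obstacle is the second term $m\abs{(\partial_y)^\perp}_E$. As the point tends to $x\in\Gamma^{m-1}$, the tangent plane $T\Sigma^m$ converges, by $C^2$-regularity, to $T_x(\Sigma^m\cup\Gamma^{m-1})$, and a direct computation shows $\abs{(\partial_y)^\perp}_E\to\abs{\cos\theta}$, where $\theta$ is the angle between $\Sigma^m$ and the plane $\{y=0\}$ at $x$ (so that orthogonal contact is $\theta=\pi/2$). Thus this term decays to zero precisely when $\Sigma^m$ meets the ideal boundary orthogonally, whereas $C^2$-regularity alone only guarantees that the contact angle exists and varies continuously along $\Gamma^{m-1}$. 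The inequality above is in fact sharp: a slanted Euclidean half-plane through a boundary point is smooth up to $\{y=0\}$ yet has constant hyperbolic mean curvature $m\abs{\cos\theta}$. Completing the proof therefore reduces to establishing asymptotic orthogonality and upgrading it to a uniform statement over the compact $\Gamma^{m-1}$, which I would seek to extract from the regularity hypothesis by the mechanism underlying Theorem~\ref{t1} together with the uniform estimate of Proposition~\ref{dist:prop}. This orthogonality step is the decisive, and I expect the hardest, point of the argument.
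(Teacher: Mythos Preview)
Your approach via the conformal transformation law for the mean curvature is exactly the paper's: the paper works in the ball model with $\bar g=\phi^2 g$, $\phi=\tfrac12(1-\abs{x}^2)$, and arrives at the estimate $\abs{H_g}^2\leqslant 2\phi^2\bigl(\abs{H_{\bar g}}^2+m^2\abs{(\overline{\grad}\phi)^\perp}^2\bigr)$, concluding that both bracketed terms are merely \emph{bounded} and that the outer factor $\phi^2\to 0$ does the work. Your derivation, carried out carefully, gives instead $\abs{H_g}\leqslant \phi\abs{H_{\bar g}}+m\abs{(\overline{\grad}\phi)^\perp}$ with \emph{no} conformal factor on the second term, and this is the correct formula: tracing the paper's Koszul computation one finds that the bracket terms contribute $-\phi^{-1}\bar e_\alpha(\phi)\delta_{ij}$, not $-\phi^{-1}e_\alpha(\phi)\delta_{ij}=-\bar e_\alpha(\phi)\delta_{ij}$ as written there. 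In other words, the paper's relation~\eqref{h-h} is off by a factor of $\phi$ in the last term, and the extra $\phi^2$ in its final inequality is spurious.

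Your slanted half-plane is then a genuine counterexample to the proposition as stated: it is $C^\infty$ up to the ideal boundary yet has constant hyperbolic mean curvature $m\abs{\cos\theta}\ne 0$. Consequently the gap you identify is not a difficulty to be overcome but an actual obstruction, and your proposed remedy---extracting orthogonality from the mechanism of Theorem~\ref{t1}---cannot work here, since that argument rests on the convex-hull property of \emph{minimal} submanifolds (Claim~\ref{c0}), which is unavailable for an arbitrary $C^2$ submanifold. The proposition becomes true, and your proof complete, once one adds the hypothesis that $\Sigma^m$ meets $S^n_\infty(\mathbf H^{n+1})$ orthogonally; this is also what is needed for the applications in Section~\ref{proofs}, where the submanifolds are assumed regular at infinity.
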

\begin{proof}
Below we view the hyperbolic space $\mathbf H^{n+1}$ as the unit ball $\mathbb B^{n+1}$ equipped with the hyperbolic metric $g$ that is related to the Euclidean metric $\bar g$ by the formula
$$
\bar g=\phi^2g,\qquad\text{where}\quad \phi(x)=\frac{1}{2}(1-\abs{x}^2),
$$
and $x\in\mathbb B^{n+1}$. Our first aim is to relate the second fundamental forms of $(\Sigma^m,\bar g)$ and $(\Sigma^m,g)$ in appropriate frames, where above we denote by the same symbols the metrics induced by $\bar g$ and $g$ on $\Sigma^m$ respectively. For a given point $x\in\Sigma^m$ we denote by $\bar e_i$, $\bar e_\alpha$, where $i=1,\ldots,m$ and $\alpha=m+1,\ldots,n+1$, a $\bar g$-orthonormal frame such that the $\bar e_i$'s span the tangent space $T_x\Sigma^m$, and the $\bar e_\alpha$'s -- the normal space $N_x\Sigma^m$. Further, straightening $\Sigma^m$ locally near $x$, we see that it extends to a local frame around $x$ such that the $\bar e_i$'s  are tangent to $\Sigma^m$ and all vector fields $\bar e_i$, $\bar e_\alpha$ commute. By $e_i$, $e_\alpha$ we denote the $g$-orthonormal frame, defined by the relations
$$
e_i=\phi\bar e_i,\qquad e_\alpha=\phi\bar e_\alpha.
$$
Now, using Koszul's formula we compute the components of the second fundamental forms:
\begin{multline*}
(\mathit{II}_{\bar g})^{\bar\alpha}_{\bar i \bar j}=\langle\bar\nabla_{\bar e_i}\bar e_j,\bar e_\alpha\rangle_{\bar g}=\frac{1}{2}\left(\bar e_i\langle\bar e_j,\bar e_\alpha\rangle_{\bar g}+\bar e_j\langle\bar e_i,\bar e_\alpha\rangle_{\bar g}-\bar e_\alpha\langle\bar e_i,\bar e_j\rangle_{\bar g}\right)\\=
\phi^{-1}\langle\nabla_{e_i}e_j,e_\alpha\rangle_{g} -\frac{1}{2}\phi^{-1}\left(\langle e_\alpha,[e_i,e_j]\rangle_{g}+\langle e_j,[e_\alpha,e_i]\rangle_g-\langle e_i,[e_j,e_\alpha]\rangle_g\right)\\=
\phi^{-1}(\mathit{II}_g)^\alpha_{ij}-\phi^{-1}e_\alpha(\phi)\delta_{ij}=\phi^{-1}(\mathit{II}_g)^\alpha_{ij}-\bar e_\alpha(\phi)\delta_{ij},
\end{multline*}
where $\delta_{ij}$ is the Kronecker symbol, and by $[\cdot,\cdot]$ we denote the Lie bracket of vector fields. Taking the trace, we obtain the following relation for the mean curvature vectors:
\begin{equation}
\label{h-h}
H^{\bar\alpha}_{\bar g}=\phi^{-1}H^\alpha_g-\bar e_\alpha(\phi)m,
\end{equation}
where $\alpha$ and $\bar\alpha$ take the same values simultaneously, but the use of the bar in superscripts underlines that the coordinates of the vectors are taken in bases $e_\alpha$ and $\bar e_\alpha$ respectively. Thus, after elementary manipulations, we arrive at the inequality
$$
\lvert H_g\rvert^2\leqslant 2\phi^2\left(\lvert H_{\bar g}\rvert^2+m^2\lvert(\overline{\grad}\phi)^\perp\rvert^2\right),
$$
where $\lvert H_g\rvert$ and $\lvert H_{\bar g}\rvert$ are lengths of mean curvature vectors in metrics $g$ and $\bar g$ respectively, and $(\overline{\grad}\phi)^\perp$ denotes the normal component of the gradient of $\phi$ with respect to the Euclidean metric $\bar g$. Note that 
\begin{equation}
\label{grad}
\overline{\grad}\phi(x)=-x,\qquad\text{where}\quad x\in\mathbb B^{n+1},
\end{equation}
and hence, the second term on the right hand-side in the inequality above is bounded. Since $\Sigma^m$ is $C^2$-smooth up to boundary, so is the first term. Finally, since $\phi(x)\to 0$ as $\abs{x}\to 1-$, we conclude that $\Sigma^m$ is asymptotically minimal with respect to the hyperbolic metric $g$ on $\mathbb B^{n+1}$.
\end{proof}

\subsection{Discussion and other consequences}
We end the section with discussing relation~\eqref{h-h}, obtained in the proof of Proposition~\ref{pam}, and its consequences. First, note that the term $\vert H_{\bar g}\rvert^2$ depends only on the first and second derivatives of the immersion $\Sigma^m$ into the Euclidean ball $\mathbb B^{n+1}$, and its behaviour near the boundary $\partial\mathbb B^{n+1}$ can be viewed as some regularity condition on its own. Second, the behaviour of the quantity $\lvert(\overline{\grad}\phi)^\perp\rvert^2$ is also closely related to the property of meeting the boundary orthogonally. More precisely, assuming that a submanifold $\Sigma^m$ in $\mathbb B^{n+1}$ is $C^1$-smooth up boundary in $\partial\mathbb B^{n+1}$, and using relation~\eqref{grad}, it is straightforward to show that the hypothesis that $\lvert(\overline{\grad}\phi)^\perp\rvert^2$ converges to zero as $\abs{x}\to 1-$ is equivalent to the hypothesis that $\Sigma^m$ meets the boundary orthogonally. In fact, the former can be viewed as a more general form for the latter.

As a consequence of this discussion, we state the following corollary, which is somewhat complementary to the content of Theorem~\ref{t1}.
\begin{corollary}
\label{cora}
Let $\Sigma^m$ be a minimal submanifold in the hyperbolic space $\mathbf{H}^{n+1}$. Then it is asymptotically minimal in the Euclidean sense, as a submanifold in $\mathbb B^{n+1}$, if and only if it meets the boundary orthogonally, that is
$$
\sup\{\lvert(\overline{\grad}\phi)^\perp(x)\rvert: x\in \Sigma^m\backslash B^m_\rho(0)\}\longrightarrow 0\qquad\text{as}\quad\rho\to 1-.
$$
In particular, if $\Sigma^m$ is $C^1$-smooth up to boundary at infinity, then it is asymptotically minimal in the Euclidean sense.
\end{corollary}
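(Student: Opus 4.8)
The plan is to read off the conclusion directly from the identity~\eqref{h-h} relating the Euclidean and hyperbolic mean curvature vectors, specialised to the minimal case. Since $\Sigma^m$ is minimal in $\mathbf{H}^{n+1}$, its hyperbolic mean curvature vanishes, so that $H^\alpha_g=0$ for every normal index $\alpha$. Substituting into~\eqref{h-h} collapses it to the clean identity
$$
H^{\bar\alpha}_{\bar g}=-m\,\bar e_\alpha(\phi),
$$
in which the awkward factor $\phi^{-1}$ has disappeared entirely. First I would record this simplification and emphasise that it is now an exact equality, rather than the one-sided estimate that sufficed in the proof of Proposition~\ref{pam}.

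Next I would pass from components to norms. Since $\bar e_\alpha(\phi)=\langle\overline{\grad}\phi,\bar e_\alpha\rangle_{\bar g}$ and the $\bar e_\alpha$ form a $\bar g$-orthonormal basis of the normal space $N_x\Sigma^m$, summing the squares of the components over $\alpha$ yields the pointwise identity
$$
\abs{H_{\bar g}}=m\,\abs{(\overline{\grad}\phi)^\perp}
$$
on $\Sigma^m$. This is the heart of the matter: for a minimal submanifold the Euclidean mean curvature and the normal component of $\overline{\grad}\phi$ differ only by the constant factor $m$, so they vanish at the boundary simultaneously. Taking the supremum over $\Sigma^m\backslash B^m_\rho(0)$ and letting $\rho\to 1-$, the condition that $\Sigma^m$ be asymptotically minimal in the Euclidean sense, namely $\sup\{\abs{H_{\bar g}(x)}:x\in\Sigma^m\backslash B^m_\rho(0)\}\to 0$, is therefore literally equivalent to the stated orthogonality condition, which establishes the main assertion of the corollary.

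Finally, for the concluding assertion I would combine two ingredients already in place. By Theorem~\ref{t1}, a minimal submanifold that is $C^1$-smooth up to the boundary at infinity meets $S^n_\infty(\mathbf H^{n+1})$ orthogonally; and by the discussion preceding the corollary, in which $\overline{\grad}\phi(x)=-x$ by~\eqref{grad}, this orthogonality is, for a submanifold that is $C^1$ up to boundary, equivalent to the vanishing of $\abs{(\overline{\grad}\phi)^\perp}$ at the boundary sphere. The desired conclusion then follows at once from the equivalence established above.

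I expect no serious obstacle here, since the substantive computation is already contained in the derivation of~\eqref{h-h}. The only points requiring a little care are the bookkeeping of frames in passing from the componentwise identity to the norm identity, and the observation that approaching $\partial\mathbb B^{n+1}$ in the Euclidean metric is precisely what ``near the ideal boundary'' means for the hyperbolic structure; neither is more than a routine verification.
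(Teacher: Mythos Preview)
Your proposal is correct and follows exactly the approach the paper intends: the paper's proof is the single sentence ``The proofs are direct consequences of relations~\eqref{h-h}--\eqref{grad}. Theorem~\ref{t1} is also used for the last statement in Corollary~\ref{cora},'' and your argument simply unpacks this, specialising~\eqref{h-h} to $H_g=0$ to obtain the pointwise identity $\abs{H_{\bar g}}=m\abs{(\overline{\grad}\phi)^\perp}$ and then invoking Theorem~\ref{t1} together with~\eqref{grad} for the final clause.
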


Conversely, given a submanifold $\Sigma^m\subset\mathbb B^{n+1}$ with certain Euclidean properties, we are able to relate the rate of decay of the hyperbolic mean curvature to the property of meeting the boundary orthogonally.
\begin{corollary}
Let $\Sigma^m\subset\mathbb B^{n+1}$ be an asymptotically minimal  submanifold in the Euclidean sense. Then it is asymptotically minimal as a submanifold in the hyperbolic space $\mathbf{H}^{n+1}$ in the unit ball model. Moreover, if $\lvert H_g\rvert/\phi\to 0+$ as $\abs{x}\to 1-$, then $\Sigma^m$ meets the boundary at infinity orthogonally.
\end{corollary}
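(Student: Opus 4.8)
The plan is to read off both assertions directly from relation~\eqref{h-h}, rearranging it in the two opposite directions. Recall that in the orthonormal frames fixed in the proof of Proposition~\ref{pam} one has $e_\alpha=\phi\bar e_\alpha$, so that $\lvert H_g\rvert^2=\sum_\alpha (H^\alpha_g)^2$, $\lvert H_{\bar g}\rvert^2=\sum_\alpha (H^{\bar\alpha}_{\bar g})^2$, and $\lvert(\overline{\grad}\phi)^\perp\rvert^2=\sum_\alpha(\bar e_\alpha(\phi))^2$, the last because $\bar e_\alpha(\phi)=\langle\overline{\grad}\phi,\bar e_\alpha\rangle_{\bar g}$ and the $\bar e_\alpha$ form a $\bar g$-orthonormal basis of the normal space. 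Thus~\eqref{h-h} becomes an identity between three vectors indexed by $\alpha$, whose Euclidean lengths are precisely $\lvert H_{\bar g}\rvert$, $\lvert H_g\rvert$, and $\lvert(\overline{\grad}\phi)^\perp\rvert$.

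First I would prove the hyperbolic asymptotic minimality. Solving~\eqref{h-h} for $H^\alpha_g$ and applying the elementary bound $(a+b)^2\leqslant 2(a^2+b^2)$ gives the inequality
$$
\lvert H_g\rvert^2\leqslant 2\phi^2\left(\lvert H_{\bar g}\rvert^2+m^2\lvert(\overline{\grad}\phi)^\perp\rvert^2\right)
$$
already recorded in the proof of Proposition~\ref{pam}. Here $\lvert(\overline{\grad}\phi)^\perp\rvert\leqslant\lvert\overline{\grad}\phi\rvert=\abs{x}\leqslant 1$ by~\eqref{grad}, the term $\lvert H_{\bar g}\rvert$ tends to zero by the hypothesis of Euclidean asymptotic minimality, and $\phi(x)\to 0$ as $\abs{x}\to 1-$. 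Taking the supremum over $\{x\in\Sigma^m:\abs{x}>\rho\}$ and letting $\rho\to 1-$, the right-hand side tends to zero; hence $\sup\{\lvert H_g\rvert:\abs{x}>\rho\}\to 0$, which is exactly asymptotic minimality in the hyperbolic metric $g$.

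For the second assertion I would instead solve~\eqref{h-h} for the normal gradient, writing $m\bar e_\alpha(\phi)=\phi^{-1}H^\alpha_g-H^{\bar\alpha}_{\bar g}$ and summing the squares over $\alpha$. The triangle inequality in $\mathbb R^{n+1-m}$ then yields
$$
m\lvert(\overline{\grad}\phi)^\perp\rvert\leqslant\frac{\lvert H_g\rvert}{\phi}+\lvert H_{\bar g}\rvert .
$$
Under the additional hypothesis $\lvert H_g\rvert/\phi\to 0+$, and using once more that $\lvert H_{\bar g}\rvert\to 0$, the right-hand side tends to zero as $\abs{x}\to 1-$. Consequently $\sup\{\lvert(\overline{\grad}\phi)^\perp(x)\rvert:x\in\Sigma^m\backslash B^m_\rho(0)\}\to 0$, which by the discussion preceding Corollary~\ref{cora} is precisely the statement that $\Sigma^m$ meets the boundary at infinity orthogonally.

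The computations are routine once~\eqref{h-h} is in hand, so the only real care needed is bookkeeping: one must keep in mind that $H^\alpha_g$ and $H^{\bar\alpha}_{\bar g}$ are components in the two \emph{different} orthonormal frames $e_\alpha$ and $\bar e_\alpha=\phi^{-1}e_\alpha$, which is exactly what legitimises passing from the scalar identity~\eqref{h-h} to the vector-norm inequalities above. The one genuinely substantive point, rather than an obstacle, is why the sharper hypothesis $\lvert H_g\rvert/\phi\to 0+$ is needed and not merely $\lvert H_g\rvert\to 0$: in the bound above one divides $\lvert H_g\rvert$ by $\phi$, which itself vanishes at the boundary, so $\lvert H_g\rvert$ must decay strictly faster than $\phi$ in order to pin down the normal component of $\overline{\grad}\phi$. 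This makes the second part a precise converse to the mechanism behind Proposition~\ref{pam}.
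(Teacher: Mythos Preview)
Your proof is correct and follows exactly the route the paper indicates: the paper merely records that the corollary is a direct consequence of relations~\eqref{h-h}--\eqref{grad}, and you have carried out precisely those rearrangements, solving~\eqref{h-h} for $H^\alpha_g$ to get the first assertion and for $\bar e_\alpha(\phi)$ to get the second. Your identification of the conclusion $\lvert(\overline{\grad}\phi)^\perp\rvert\to 0$ with ``meets the boundary orthogonally'' via the discussion preceding Corollary~\ref{cora} is also exactly how the paper intends it.
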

The proofs are direct consequences of relations~\eqref{h-h}--\eqref{grad}. Theorem~\ref{t1} is also used for the last statement in Corollary~\ref{cora}.

\section{Proof of Theorem~\ref{t2}}
\label{proofs}

\subsection{Outline of the strategy}
The proof of Theorem~\ref{t2} follows closely the line of argument in the proof of~\cite[Theorem~1.3]{Ko23}. More precisely, by Corollary~\ref{ess} and the standard spectrum classification for self-adjoint operators, see~\cite{BD}, it is sufficient to show that any real number $\lambda>(m-1)^2/4$ belongs to the spectrum of $\Sigma^m$. For the latter it is, in turn, sufficient to construct a sequence $(\phi_k)$ of smooth functions with disjoint compact supports,  and a sequence of positive real numbers $(\epsilon_k)$, $\epsilon_k\to 0+$, such that
\begin{equation}
\label{r:aim}
\int_{\Sigma^m}\abs{\Delta\phi_k-\lambda\phi_k}^2d\mathit{Vol}\leqslant\epsilon_k\int_{\Sigma^m}\abs{\phi_k}^2d\mathit{Vol}\qquad\text{for all~} k\in\mathbb N.
\end{equation}
The strategy is first to construct appropriate radial test-functions on the ambient space $\mathbf{H}^{n+1}$, and use them to compute the spectrum of cones. Then, we perform a version of this argument on $\Sigma^m$, controlling the deviation of the estimates from the ones on the cone. Below we describe the necessary ingredients for this in more detail.

\subsection{Test-functions and spectrum of cones}
For the rest of the section we assume that $\Sigma^m$ is an asymptotically minimal submanifold that is regular at infinity -- for a submanifold $\Gamma^{m-1}\subset S^n_\infty(\mathbf{H}^{n+1})$ the union $\Sigma^m\cup\Gamma^{m-1}$ is a $C^1$-smooth submanifold with boundary that meets the boundary at infinity orthogonally. Let $C^m$ be a cone defined by the following relation
\begin{equation}
\label{cone:def}
C^m=\{\tau z: z\in\Gamma^{m-1}, \tau\in (0,1]\}\subset\mathbb B^{n+1}.
\end{equation}
By the radial function we mean a function that depends on the distance $r(x)=\dist(x,p)$ only, where $p\in\mathbf{H}^{n+1}$ is a point that corresponds to the origin in the unit ball $\mathbb B^{n+1}$. We start with the following well-known fact, see~\cite{Ko23}, which for the convenience of references, we state as a lemma. 
\begin{lemma}
\label{laplace:cone}
Let $f$ be a smooth radial function on $\mathbf{H}^{n+1}$. Then the Laplacian of its restriction to the cone $C^m$ is given by the formula
$$
-\Delta f(x)=f''(r(x))+(m-1)\coth(r(x))f'(r(x)),
$$
where $r(x)=\dist(x,p)$ and $x\in C^m$.
\end{lemma}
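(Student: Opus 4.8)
The plan is to exploit the fact that in the Poincar\'e ball model the cone $C^m$ is swept out by hyperbolic geodesic rays emanating from the centre $p$, so that the induced metric on $C^m$ is a warped product and the Laplacian of a radial function reduces to an ordinary differential expression in $r$.

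First I would observe that Euclidean straight lines through the origin of $\mathbb B^{n+1}$ are precisely the hyperbolic geodesics through the point $p$. Since the cone $C^m=\{\tau z: z\in\Gamma^{m-1}, \tau\in(0,1]\}$ consists of Euclidean segments joining the origin to boundary points $z\in\Gamma^{m-1}\subset S^n$, it follows that $C^m$ is a union of hyperbolic geodesic rays issuing from $p$. Consequently, writing the metric of $\mathbf H^{n+1}$ in geodesic polar coordinates centred at $p$, namely $dr^2+\sinh^2(r)\,d\omega^2$ with $d\omega^2$ the round metric on the unit sphere $S^n\subset T_p\mathbf H^{n+1}$, the cone is parametrised as $\{\exp_p(r\theta):\theta\in\gamma^{m-1},\ r>0\}$, where $\gamma^{m-1}\subset S^n$ denotes the set of initial directions of the geodesics forming $C^m$.

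Next I would restrict this ambient metric to $C^m$. Because each geodesic ray of the cone has constant initial direction, the angular parameter $\theta$ ranges over the fixed submanifold $\gamma^{m-1}$, and the induced metric takes the warped-product form $g|_{C^m}=dr^2+\sinh^2(r)\,h$, where $h$ is the metric induced on $\gamma^{m-1}$ by $d\omega^2$. In particular, the associated volume density equals $\sinh^{m-1}(r)\,dV_h$, and the radial field $\partial_r$ is a unit vector tangent to $C^m$ with $\abs{\grad r}=1$. For a function $f$ depending on $r$ alone, the intrinsic Laplacian on $(C^m,g|_{C^m})$ then reduces to its radial part,
$$
-\Delta f=\frac{1}{\sinh^{m-1}(r)}\bigl(\sinh^{m-1}(r)\,f'(r)\bigr)'=f''(r)+(m-1)\coth(r)\,f'(r),
$$
which is exactly the claimed formula, with the sign convention that makes $\Delta$ a non-negative operator.

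The only genuinely substantive point is the geometric identification in the first two paragraphs: recognising that, in the unit ball model, $C^m$ is a union of hyperbolic geodesic rays from $p$ and therefore carries a warped-product metric with warping factor $\sinh(r)$. Once this structure is in place, the evaluation of the Laplacian is a routine one-dimensional computation and the proof concludes immediately.
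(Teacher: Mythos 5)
Your proof is correct, but it is worth pointing out that the paper itself does not prove Lemma~\ref{laplace:cone} at all: it is stated as a well-known fact with a reference to~\cite{Ko23}, and the computational framework that the paper (following~\cite{Ko21,Ko23}) uses for statements of this type is visible in the sketch of Lemma~\ref{laplace:current}. Your route is genuinely different from that framework. You argue intrinsically: in the ball model the cone is swept out by hyperbolic geodesic rays from $p$, so the induced metric is the warped product $dr^2+\sinh^2(r)\,h$ on $(0,+\infty)\times\Gamma^{m-1}$ (note that in the model centred at $p$ the set of initial directions is literally $\Gamma^{m-1}$ itself, viewed in the unit sphere of $T_p\mathbf H^{n+1}$), and the Laplacian of a radial function collapses to the one-dimensional expression $\sinh^{1-m}(r)\left(\sinh^{m-1}(r)f'(r)\right)'$, with the sign convention matching the paper's. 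The approach used in the paper's related arguments is instead extrinsic: for any submanifold one writes $-\Delta (f|_{\Sigma})$ as the trace over $T_x\Sigma$ of the ambient Hessian of $f$ plus the term $f'(r)\langle X_r,H_x\rangle$; for a radial $f$ on $\mathbf H^{n+1}$ one has $\hess f=f''\,dr\otimes dr+f'\coth(r)\left(g-dr\otimes dr\right)$, the radial field is tangent to the cone, and the mean curvature term pairs to zero, which yields the same formula. The trade-off is clear: your warped-product computation is more elementary and entirely self-contained for the exact cone, whereas the extrinsic calculus is what generalises to submanifolds that are only asymptotically cone-like, with controllable error terms --- which is exactly what Lemma~\ref{laplace:current} requires, and why the paper's toolkit is organised around it.
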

Now for a given $\lambda>(m-1)^2/4$ we consider the complex-valued function
$$
\psi(t)=\sinh^{-(m-1)/2}(t)\exp(i\beta t),\qquad\text{where~}\beta=\sqrt{\lambda-\frac{1}{4}(m-1)^2},
$$
and $t$ ranges in the interval $(0,+\infty)$. A straightforward calculation shows that it satisfies the relation
\begin{equation}
\label{psi}
\psi''(t)+(m-1)\coth(t)\psi'(t)+(\lambda+\alpha(t))\psi(t)=0,
\end{equation}
where by $\alpha$ we denote the function
$$
\alpha(t)=\frac{1}{4}(m-1)(m-3)\sinh^{-2}(t).
$$
Further, for a given $R>0$ we define the model function $\upsilon_R$ on $(0,+\infty)$ by the formula
$$
\upsilon_R(t)=\left\{
\begin{array}{ll}
\psi(t)\sin^2\left(\frac{2\pi}{R}\left(t-\frac{R}{2}\right)\right), & \text{if~ } t\in[R/2,R];\\
0, & \text{otherwise}.
\end{array}
\right.
$$
The following lemma, whose proof can be found in~\cite{Ko23}, establishes integral estimates for the derivatives of $\upsilon_R$, which play the key role in the sequel.
\begin{lemma}
\label{est}
For any $R>0$ the function $\upsilon_R$, defined above, satisfies the inequality
$$
\int\limits_{R/2}^{R}\abs{\upsilon_{R}''(t)+(m-1)\coth(t)\upsilon_{R}'(t)+\lambda\upsilon_{R}(t)}^2\sinh^{m-1}(t)dt\leqslant \epsilon_R\int\limits_{R/2}^{R}\abs{\upsilon_R(t)}^2\sinh^{m-1}(t)dt,
$$
where 
$$
\epsilon_R=2\max\left\{\alpha^2\left(\frac{R}{2}\right), 16\abs{\beta}^2\left(\frac{2\pi}{R}\right)^2, 16\left(\frac{2\pi}{R}\right)^4\right\}.
$$
In addition, for a fixed $R_0>0$ and any $R>R_0$ the following inequalities hold:
\begin{equation}
\label{est:in1}
\int\limits_{R/2}^{R}\abs{\upsilon'_R(t)}^2\sinh^{m-1}(t)dt\leqslant C_*\int\limits_{R/2}^{R}\abs{\upsilon_R(t)}^2\sinh^{m-1}(t)dt,
\end{equation}
\begin{equation}
\label{est:in2}
\int\limits_{R/2}^{R}\abs{\upsilon''_R(t)}^2\sinh^{m-1}(t)dt\leqslant C_{*}\int\limits_{R/2}^{R}\abs{\upsilon_R(t)}^2\sinh^{m-1}(t)dt,
\end{equation}
where the constant $C_*$ depends on $m$, $\lambda$, and $R_0$.
\end{lemma}

Now we explain how, constructing test-functions, one shows that any $\lambda>(m-1)^2/4$ lies in the spectrum of the cone $C^m$. Following the argument in~\cite{Ko23}, choose a sequence $R_k\to +\infty$ such that $R_{k+1}>2R_k$, and denote by $\upsilon_k$ compactly supported smooth functions on the real line whose supports are mutually disjoint, and such that the following inequalities hold:
\begin{multline}
\label{aprox:1}
\int\limits_{0}^{+\infty}\abs{\upsilon_{k}''(t)+(m-1)\coth(t)\upsilon_{k}'(t)+\lambda\upsilon_{k}(t)}^2\sinh^{m-1}(t)dt \\ \leqslant 2\int\limits_{R_k/2}^{R_k}\abs{\upsilon_{R_k}''(t)+(m-1)\coth(t)\upsilon_{R_k}'(t)+\lambda\upsilon_{R_k}(t)}^2\sinh^{m-1}(t)dt,
\end{multline}

\begin{equation}
\label{aprox:2}
\int\limits_{R_k/2}^{R_k}\abs{\upsilon_{R_k}}^2\sinh^{m-1}(t)dt\leqslant 2\int\limits_{0}^{+\infty}\abs{\upsilon_{k}}^2\sinh^{m-1}(t)dt.
\end{equation}
Such smooth functions $\upsilon_k$ can be constructed as approximations of $W^{2,2}$-functions $\upsilon_{R_k}$, for example, by using the mollification technique. Denote by $\phi_k$ the restriction of the function $\upsilon_k\circ r$ to the cone $C^m$. Then, by Lemma~\ref{laplace:cone} we obtain
$$
\int_{C^m} \abs{\Delta\phi_k-\lambda\phi_k}^2d\mathit{Vol}=\omega(\Gamma^{m-1})\int\limits_{0}^{+\infty}\abs{\upsilon_{k}''(t)+(m-1)\coth(t)\upsilon_{k}'(t)+\lambda\upsilon_{k}(t)}^2\sinh^{m-1}(t)dt, 
$$
where $\omega(\Gamma^{m-1})$ is the volume of $\Gamma^{m-1}$ in the standard metric on the unit sphere $\partial\mathbb B^{n+1}$. Finally, combining the above with Lemma~\ref{est} and relations~\eqref{aprox:1}--\eqref{aprox:2}, we arrive at the inequality
\begin{equation}
\label{f:cone}
\int_{C^m} \abs{\Delta\phi_k-\lambda\phi_k}^2d\mathit{Vol}\leqslant\epsilon_k\int_{C^m}\abs{\phi_k}^2d\mathit{Vol},
\end{equation}
where $\epsilon_k=4\epsilon_{R_k}$, and $\epsilon_{R_k}$ is the quantity from Lemma~\ref{est}.  Since $\epsilon_{R_k}\to 0+$, we conclude that such a real number $\lambda$ indeed lies in the spectrum of the cone $C^m$.

\subsection{Controlling the deviation from the cone}
To perform an argument similar to the one above on an asymptotically minimal submanifold $\Sigma^m$, we need two auxiliary lemmas. The first is a version of the statement in~\cite{Ko23}, where it is proved for singular minimal submanifolds.
\begin{lemma}
\label{laplace:current}
Let $\Sigma^m$ be an asymptotically minimal submanifold that is regular at infinity. Then for any $\varepsilon>0$ there exists $R>0$ such that for any smooth radial function $f$ that is supported in the complement of the hyperbolic ball $\mathbf{B}_{R}(p)$ the Laplacian of its restriction to $\Sigma^m$ is given by the formula
$$
-\Delta f(x)=f''(r(x))+(m-1)\coth(r(x))f'(r(x)) +E(x),
$$
where the error term $E(x)$ satisfies the estimate
$$
\abs{E(x)}\leqslant\varepsilon\left(\abs{f''(r(x))}+\abs{f'(r(x))}\right),
$$
$r(x)=\dist(x,p)$, and $x\in\Sigma^m$.
\end{lemma}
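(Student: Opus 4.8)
The plan is to compute the submanifold Laplacian of $f\circ r$ exactly, isolate the discrepancy from the cone model $f''+(m-1)\coth(r)f'$ appearing in Lemma~\ref{laplace:cone}, and bound this discrepancy using the two smallness hypotheses: asymptotic minimality (smallness of $\abs{H_x}$) and orthogonality at infinity (smallness of the normal component of the radial direction).

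First I would record the exact identity underlying the comparison already used in the proof of Theorem~\ref{tc}. Writing $r(x)=\dist(x,p)$ and combining the decomposition of the Laplacian of a radial function on $\Sigma^m$ with the ambient Hessian of the distance function in $\mathbf H^{n+1}$, namely $\hess r=\coth(r)(g-dr\otimes dr)$, one obtains
$$
-\Delta r=\coth(r)(m-\abs{\nabla r}^2)+\langle\grad r,H\rangle,
$$
where $\grad r$ denotes the ambient unit radial field, $\abs{\nabla r}$ the norm of the tangential gradient, and $H=H_x$ the mean curvature vector; the inequality of Theorem~\ref{tc} follows since $\abs{\langle\grad r,H\rangle}\leqslant\abs{H_x}$. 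Substituting this into $-\Delta(f\circ r)=f''\abs{\nabla r}^2-f'\Delta r$ and subtracting the cone expression gives
$$
E(x)=(\abs{\nabla r}^2-1)\bigl(f''(r)-\coth(r)f'(r)\bigr)+f'(r)\langle\grad r,H\rangle.
$$
Thus the two quantities to be controlled are $1-\abs{\nabla r}^2=\abs{(\grad r)^\perp}^2$, the squared norm of the normal component of the radial field, and $\abs{H_x}$.

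Next I would estimate each term. Since $\coth(r)\to1$, for $r$ large one has $\abs{f''-\coth(r)f'}\leqslant\abs{f''}+2\abs{f'}$, so the first summand of $E$ is dominated by $\abs{(\grad r)^\perp}^2(\abs{f''}+2\abs{f'})$, while the second is at most $\abs{H_x}\,\abs{f'}$. The bound $\abs{H_x}\leqslant\epsilon_R(p)$ for $r(x)>R$, with $\epsilon_R(p)\to0$, is precisely asymptotic minimality. For the first summand I would invoke the analysis of Section~\ref{exas}: because $g$ and $\bar g$ are conformal, the tangent–normal splitting is common to both metrics, so the hyperbolic angle between $\grad r$ and $T_x\Sigma^m$ equals the Euclidean angle $\theta(x)$ between the position direction $x/\abs{x}$ and $T_x\Sigma^m$; hence $\abs{(\grad r)^\perp}=\sin\theta(x)$, which is comparable to $\abs{(\overline{\grad}\phi)^\perp}=\abs{x^\perp}$ as $\abs{x}\to1-$. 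By the discussion preceding Corollary~\ref{cora}, the orthogonality hypothesis is equivalent to $\abs{(\overline{\grad}\phi)^\perp}\to0$, which yields $\abs{(\grad r)^\perp}\to0$. Choosing $R$ so large that both $\epsilon_R(p)$ and $\sup\{\abs{(\grad r)^\perp}^2:r(x)>R\}$ fall below a suitable multiple of $\varepsilon$ then gives the stated estimate $\abs{E(x)}\leqslant\varepsilon(\abs{f''(r(x))}+\abs{f'(r(x))})$.

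The main obstacle is the uniform decay $\sup\{\abs{(\grad r)^\perp(x)}:r(x)>R\}\to0$ as $R\to\infty$, as opposed to decay along individual sequences. This is exactly where regularity at infinity is indispensable: the far region $\{r(x)>R\}$ corresponds to $\abs{x}$ near $1$, where, by the convex hull property (Claim~\ref{c0}), points of $\Sigma^m$ accumulate only on the compact boundary $\Gamma^{m-1}$, and the $C^1$-smoothness up to $\Gamma^{m-1}$ supplies a uniform modulus for the convergence of the tangent spaces to their limiting orthogonal position, in the spirit of the uniform statement of Proposition~\ref{dist:prop}. Compactness of $\Gamma^{m-1}$ is precisely what promotes pointwise orthogonality to the uniform bound required for $E$.
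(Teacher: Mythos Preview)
Your argument is correct and yields the same error control as the paper, but by a somewhat more direct route. The paper (following~\cite{Ko23}) constructs an adapted orthonormal frame $\{\tilde X_r,X_1/\sinh r,\ldots,X_{m-1}/\sinh r\}$ for $T_x\Sigma^m$ by intersecting $\Sigma^m$ transversally with the geodesic sphere $\partial\mathbf{B}_r(p)$, writes $\tilde X_r=\varphi X_r+Y_r$ with $Y_r\perp X_r$, computes the Laplacian via the ambient Hessian in this frame, and obtains an error with three pieces $(1-\varphi^2)f''$, $-\langle X_r,\nabla_{Y_r}Y_r\rangle f'$, and $f'\langle X_r,H_x\rangle$; the decay of the first two is then imported from~\cite{Ko23}. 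You instead invoke the exact identity $-\Delta_\Sigma r=\coth(r)(m-\abs{\nabla r}^2)+\langle\grad r,H\rangle$ together with the chain rule, which packages the geometry into the single scalar $1-\abs{\nabla r}^2=\abs{(\grad r)^\perp}^2$. Since $\abs{\nabla r}^2=\varphi^2$ and $\langle X_r,\nabla_{Y_r}Y_r\rangle=-\coth(r)\abs{Y_r}^2=-\coth(r)(1-\varphi^2)$, the two error formulae agree up to an immaterial sign. Your subsequent identification of $\abs{(\grad r)^\perp}$ with the Euclidean angle via conformality of the Poincar\'e metric, together with the discussion preceding Corollary~\ref{cora}, replaces the paper's external citation of~\cite{Ko23} and keeps the argument internal to the present paper; this is a genuine simplification.

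One small correction: your appeal to Claim~\ref{c0} for the assertion that far points of $\Sigma^m$ accumulate only on $\Gamma^{m-1}$ is misplaced, since that claim (the convex hull property) is stated and proved only for \emph{minimal} submanifolds, not asymptotically minimal ones. You do not need it, however: the regularity-at-infinity hypothesis already asserts that $\Sigma^m\cup\Gamma^{m-1}$ is a $C^1$-submanifold with boundary in the closed ball, so the far region $\{x\in\Sigma^m:\abs{x}>1-\delta\}$ is automatically a collar neighbourhood of the compact boundary $\Gamma^{m-1}$, and the uniform convergence of tangent planes to the orthogonal position then follows from $C^1$-regularity and compactness of $\Gamma^{m-1}$ exactly as you indicate.
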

The proof of a similar statement in~\cite{Ko23} carries over asymptotically minimal submanifolds that are regular at infinity. However, for reader's convenience, we outline the key points below.
\begin{proof}[Sketch of the proof]
Since $\Sigma^m$ meets the boundary at infinity orthogonally, for a sufficiently large $r=r(x)$ the tangent spaces $T_x\Sigma^m$ and $T_x\partial\mathbf{B}_r(p)$ span $T_x\mathbf{H}^{n+1}$, and we conclude that $\Sigma^m$ and $\partial\mathbf{B}_r(p)$ intersect transversally. Thus, the intersection $\Sigma^m\cap\partial\mathbf{B}_r(p)$ is a submanifold in $\partial\mathbf{B}_r(p)$, and it is straightforward to construct vector fields $X_i$ in a neighbourhood of $x$ in $\mathbf{H}^{n+1}$, where $i=1,\ldots, n$, such that at the point $x$ they form an orthogonal system in $T_x\mathbf{B}_r(p)$, and the first $(m-1)$ vectors $X_1,\ldots, X_{m-1}$ at $x$ lie in the tangent space $T_x\Sigma^m$. Now we define the vector $\tilde X_r\in T_x\Sigma^m$ as a unit vector that is orthogonal to these vectors and such that the product $\langle \tilde X_r,X_r\rangle$ is positive, where $X_r$ is the radial vector field $\partial/\partial r$ in $\mathbf{H}^{n+1}$. This vector $\tilde X_r$ can be extended to a vector field around $x$ such that
\begin{equation}
\label{def:xr}
\abs{\tilde X_r}^2=1\qquad\text{and}\qquad\langle\tilde X_r,X_i\rangle=0\quad\text{for all}\quad i=1,\ldots,m-1.
\end{equation}
As is explained in the proof of~\cite[Lemma~4.4]{Ko23}, the hyperbolic length $\abs{\tilde X_r-X_r}$ tends to zero as $r=r(x)$ tends to infinity. Thus, writing $\tilde X_r$ in the form
\begin{equation}
\label{decom}
\tilde X_r=\varphi X_r+Y_r,\qquad\text{where}\quad\langle X_r,Y_r\rangle=0,
\end{equation}
we conclude that the function $(1-\varphi^2)$ and the hyperbolic length $\abs{Y_r}$ tend to zero when $r=r(x)$ tends to infinity. A standard computation, which can be found in~\cite{Ko21}, shows that the Laplacian of the restriction of $f$ to $\Sigma^m$ at $x$ is now given by the formula
$$
-\Delta f(x)=\hess_x f(\tilde X_r,\tilde X_r)+\frac{1}{\sinh^2(r(x))}\sum_{i=1}^{m-1}\hess_x f(X_i,X_i)+f'(r)\langle X_r,H_x\rangle,
$$
where $H_x$ is the mean curvature at $x\in\Sigma^m$. Again, following the lines in the proof of~\cite[Lemma~4.4]{Ko23}, we see that the error term $E(x)$ in the statement of the lemma, has the form
$$
E(x)=(1-\varphi^2)f''(r)-\langle X_r,\nabla_{Y_r}Y_r\rangle f'(r)+ f'(r)\langle X_r,H_x\rangle.
$$
From the above we already know that the function $(1-\varphi^2)$ is small for a large $r=r(x)$. Besides, since $\Sigma^m$ is asymptotically minimal, so is the term $\langle X_r,H_x\rangle$ on the right hand-side above. As is shown in the proof of~\cite[Lemma~4.4]{Ko23}, the product $\langle X_r,\nabla_{Y_r}Y_r\rangle$ also tends to zero as $r=r(x)\to+\infty$, and thus, we are done.
\end{proof}

Finally, we also need the following lemma, see~\cite[Lemma~4.1]{Ko23}.
\begin{lemma}
\label{vol}
Let $\Sigma^m$ be a submanifold in the hyperbolic space $\mathbf{H}^{n+1}$ that is asymptotic to $\Gamma^{m-1}\subset S^n_\infty(\mathbf{H}^{n+1})$ and regular at infinity, and let $C^m$ be the cone given by~\eqref{cone:def}. Then, for any $0<\varepsilon<1$ there exists $R>0$ such that for any non-negative bounded radial function $f$ with compact support in the complement of the hyperbolic ball $\mathbf{B}_{R}(p)$ the inequalities 
$$
(1-\varepsilon)\!\!\!\int_{C^m} fd\mathit{Vol}\leqslant\int_{\Sigma^m} fd\mathit{Vol}\leqslant (1+\varepsilon)\!\!\!\int_{C^m} fd\mathit{Vol}
$$
hold.
\end{lemma}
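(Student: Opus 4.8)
\emph{The plan is to} disintegrate both integrals over the geodesic spheres $\partial\mathbf{B}_\rho(p)$ and compare the resulting cross-sectional volume functions. Since $f$ is radial, write $f(x)=f(\rho)$ on the level set $\{r=\rho\}$, where $r(x)=\dist(x,p)$, and let $\abs{\nabla r}$ denote the norm of the gradient of $r$ along the submanifold. Because $\Sigma^m$ is regular at infinity, for large $r$ the unit radial field $X_r$ is almost tangent to $\Sigma^m$ (this is the relation $\tilde X_r\to X_r$ used in the proof of Lemma~\ref{laplace:current}), so $\Sigma^m$ meets each sphere $\partial\mathbf{B}_\rho(p)$ transversally and $\abs{\nabla r}>0$ on the intersection. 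The coarea formula then gives
$$
\int_{\Sigma^m}f\,d\mathit{Vol}=\int_0^{\infty}f(\rho)\,V_\Sigma(\rho)\,d\rho,\qquad V_\Sigma(\rho)=\int_{\Sigma^m\cap\partial\mathbf{B}_\rho(p)}\frac{d\mathcal H^{m-1}}{\abs{\nabla r}},
$$
and likewise $\int_{C^m}f\,d\mathit{Vol}=\int_0^\infty f(\rho)\,V_C(\rho)\,d\rho$. Thus it suffices to show $V_\Sigma(\rho)/V_C(\rho)\to 1$ as $\rho\to+\infty$: choosing $R$ so large that this ratio lies in $[1-\varepsilon,1+\varepsilon]$ for all $\rho>R$, and using that $f\geqslant 0$ is supported in $\{r>R\}$, the two asserted inequalities follow by integrating the pointwise bounds $(1-\varepsilon)V_C(\rho)\leqslant V_\Sigma(\rho)\leqslant(1+\varepsilon)V_C(\rho)$ against $f(\rho)$.

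\emph{First I would} compute $V_C(\rho)$ exactly. The cone $C^m$ is a union of hyperbolic geodesic rays through $p$, so $\abs{\nabla r}\equiv 1$ along $C^m$, and its cross-section $C^m\cap\partial\mathbf{B}_\rho(p)$ consists precisely of the directions in $\Gamma^{m-1}$. Identifying $\partial\mathbf{B}_\rho(p)$ isometrically with the round $n$-sphere scaled by $\sinh\rho$, one gets $V_C(\rho)=\mathcal H^{m-1}(C^m\cap\partial\mathbf{B}_\rho(p))=\omega(\Gamma^{m-1})\sinh^{m-1}(\rho)$, recovering the constant $\omega(\Gamma^{m-1})$ appearing earlier in the computation of $\int_{C^m}\abs{\Delta\phi_k-\lambda\phi_k}^2\,d\mathit{Vol}$. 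For $\Sigma^m$ I would compare its cross-section to $\Gamma^{m-1}$ by radial projection. In the unit ball model $\partial\mathbf{B}_\rho(p)$ is the Euclidean sphere $\{\abs{x}=\tanh(\rho/2)\}$, and the projections $\Gamma_\rho=\{x/\abs{x}:x\in\Sigma^m,\ \abs{x}=\tanh(\rho/2)\}\subset\partial\mathbb B^{n+1}$ converge to $\Gamma^{m-1}$ as $\rho\to+\infty$.

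\emph{Then} I would use that $\Sigma^m\cup\Gamma^{m-1}$ is $C^1$ up to the boundary and meets it orthogonally to upgrade this to $C^1$-convergence $\Gamma_\rho\to\Gamma^{m-1}$, whence $\mathcal H^{m-1}_{\mathrm{round}}(\Gamma_\rho)\to\omega(\Gamma^{m-1})$ and therefore $\mathcal H^{m-1}(\Sigma^m\cap\partial\mathbf{B}_\rho(p))=\sinh^{m-1}(\rho)\,\mathcal H^{m-1}_{\mathrm{round}}(\Gamma_\rho)=(1+o(1))\,\omega(\Gamma^{m-1})\sinh^{m-1}(\rho)$. It remains to handle the weight $\abs{\nabla r}^{-1}$. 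Here $\nabla r$ is the tangential projection of the unit radial field $X_r$, so $\abs{\nabla r}=\cos\angle(X_r,T_x\Sigma^m)$; orthogonal meeting at infinity forces this angle to zero uniformly as $r\to+\infty$, whence $\abs{\nabla r}^{-1}\to 1$ uniformly on $\Sigma^m\cap\partial\mathbf{B}_\rho(p)$. Combining the area asymptotics with this uniform weight estimate yields $V_\Sigma(\rho)=(1+o(1))V_C(\rho)$, as required.

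\emph{The hard part will be} the third paragraph's claim that regularity at infinity --- that is, $C^1$-smoothness up to the boundary together with orthogonal meeting --- yields genuine $C^1$-convergence $\Gamma_\rho\to\Gamma^{m-1}$ of the cross-sections, with the attendant uniform convergence of both the induced $(m-1)$-area element and of $\abs{\nabla r}$ to their limiting cone values. Concretely I would realise $\Sigma^m$ near each boundary point as a $C^1$-graph over the tangent half-space $T_x\Gamma^{m-1}\times[0,+\infty)$ identified in Theorem~\ref{t1}, check that orthogonality makes the radial and graph directions agree to first order, and then pass to a finite cover of the compact boundary $\Gamma^{m-1}$ to render all estimates uniform. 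This is precisely the content carried over from~\cite[Lemma~4.1]{Ko23}: the estimates there are established for minimal submanifolds, but since the argument uses only the regularity at infinity and not minimality, it applies in the present setting.
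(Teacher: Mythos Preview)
Your proposal is correct and, in fact, goes further than the paper: the paper does not prove this lemma at all but simply cites \cite[Lemma~4.1]{Ko23}. Your coarea-formula reduction to the ratio $V_\Sigma(\rho)/V_C(\rho)\to 1$, together with the observation that $C^1$-regularity up to the ideal boundary plus orthogonal meeting forces both $\mathcal H^{m-1}_{\mathrm{round}}(\Gamma_\rho)\to\omega(\Gamma^{m-1})$ and $\abs{\nabla r}\to 1$ uniformly, is a sound outline of the argument behind that citation; you also correctly isolate the key remark that the proof uses only regularity at infinity and not minimality, which is exactly why the lemma transfers to the present asymptotically-minimal setting.
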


Now we are able to finish the proof of Theorem~\ref{t2}. Following the line of argument in~\cite{Ko23}, for a given $0<\varepsilon<1$ let $R>0$ be a real number that satisfies the conclusions of  Lemmas~\ref{est} -- \ref{vol}. For a sequence $R_k\to +\infty$ such that $R_{k+1}>2R_k>2R$, we can choose compactly supported smooth functions  $\upsilon_k$ defined on the real line whose supports  are mutually disjoint, such that they satisfy relations~\eqref{aprox:1}-\eqref{aprox:2}. Since such functions are constructed as $W^{2,2}$-approximations of the $\upsilon_{R_k}$'s, we may also assume that they satisfy the inequalities 
\begin{equation}
\label{aprox:3}
\int\limits_{0}^{+\infty}\abs{\upsilon'_{k}}^2\sinh^{m-1}(t)dt\leqslant 4C_*\int\limits_{0}^{+\infty}\abs{\upsilon_{k}}^2\sinh^{m-1}(t)dt,
\end{equation}
\begin{equation}
\label{aprox:4}
\int\limits_{0}^{+\infty}\abs{\upsilon''_{k}}^2\sinh^{m-1}(t)dt\leqslant 4C_*\int\limits_{0}^{+\infty}\abs{\upsilon_{k}}^2\sinh^{m-1}(t)dt,
\end{equation}
where $C_*$ is the constant from Lemma~\ref{est}. Now denote by $\tilde\phi_k$ the restriction of the function $\upsilon_k\circ r$ to the submanifold $\Sigma^m$. Then, by Lemma~\ref{laplace:current} we obtain
\begin{multline}
\label{f:source}
\int_{\Sigma^m} \abs{\Delta\tilde\phi_k-\lambda\tilde\phi_k}^2d\mathit{Vol}\leqslant
3\int_{\Sigma^m} \abs{\upsilon''_k+(m-1)\coth(\cdot)\upsilon'_k+\lambda\upsilon_k}^2 d\mathit{Vol}\\
+3\varepsilon^2\int_{\Sigma^m} \abs{\upsilon''_k}^2+\abs{\upsilon'_k}^2 d\mathit{Vol}.
\end{multline}
To estimate the first integral on the right hand-side above we use Lemmas~\ref{laplace:cone} and~\ref{vol}, and already obtained estimate~\eqref{f:cone} for the cone $C^m$. Combining all these ingredients, we get
\begin{multline*}
\int_{\Sigma^m} \abs{\upsilon''_k+(m-1)\coth(\cdot)\upsilon'_k+\lambda\upsilon_k}^2 d\mathit{Vol}\\
\leqslant(1+\varepsilon)\int_{C^m} \abs{\upsilon''_k+(m-1)\coth(\cdot)\upsilon'_k+\lambda\upsilon_k}^2 d\mathit{Vol}=(1+\varepsilon)\int_{C^m} \abs{\Delta\phi_k-\lambda\phi_k}^2d\mathit{Vol}\\
\leqslant\varepsilon_k(1+\varepsilon)\int_{C^m}\abs{\phi_k}^2d\mathit{Vol}
\leqslant\varepsilon_k\frac{1+\varepsilon}{1-\varepsilon}\int_{\Sigma^m}\abs{\tilde\phi_k}^2d\mathit{Vol}.
\end{multline*}
Here, we follow the notation used above, denoting by $\phi_k$ the restriction of the radial function $\upsilon_k$ to the cone $C^m$, and $\varepsilon_k=4\varepsilon_{R_k}$, where $\varepsilon_R$ is given by the relation in Lemma~\ref{est}. To estimate the second integral on the right hand-side of relation~\eqref{f:source}, we use integral inequalities~\eqref{aprox:3} and~\eqref{aprox:4} for the first two derivatives of $\upsilon_k$. In more detail, we can bound the $L^2$-norm of $\upsilon'_k\circ r$ on $\Sigma^m$ in the following fashion:
\begin{multline*}
\int_{\Sigma^m}\abs{\upsilon'_k}^2 d\mathit{Vol}\leqslant (1+\varepsilon)\int_{C^m}\abs{\upsilon'_k}^2 d\mathit{Vol}
\\=(1+\varepsilon)\omega(\Gamma^{m-1})\int\limits_0^{+\infty}\abs{\upsilon'_k(t)}^2\sinh^{m-1}(t)dt\\
\leqslant 4C_*(1+\varepsilon)\omega(\Gamma^{m-1})\int\limits_0^{+\infty}\abs{\upsilon_{k}(t)}^2\sinh^{m-1}(t)dt\\ =4C_*(1+\varepsilon)\int_{C^m}\abs{\upsilon_k}^2 d\mathit{Vol}
\leqslant 4C_*\frac{1+\varepsilon}{1-\varepsilon}\int_{\Sigma^m}\abs{\tilde\phi_k}^2 d\mathit{Vol},
\end{multline*}
where $\omega(\Gamma^{m-1})$ is the volume of $\Gamma^{m-1}$ in the standard metric in the unit sphere $\partial\mathbb B^{n+1}$. Similarly, we get the following estimate for the $L^2$-norm of $\upsilon_k''\circ r$:
$$
\int_{\Sigma^m}\abs{\upsilon''_k}^2 d\mathit{Vol}\leqslant 4C_*\frac{1+\varepsilon}{1-\varepsilon}\int_{\Sigma^m}\abs{\tilde\phi_k}^2 d\mathit{Vol}.
$$
Combining all these inequalities with relation~\eqref{f:source}, we finally obtain
$$
\int_{\Sigma^m} \abs{\Delta\tilde\phi_k-\lambda\tilde\phi_k}^2d\mathit{Vol}\leqslant
(3\varepsilon_k+24C_*\varepsilon^2)\frac{1+\varepsilon}{1-\varepsilon}\int_{\Sigma^m}\abs{\tilde\phi_k}^2 d\mathit{Vol}.
$$
Since $0<\epsilon<1$ is arbitrary and $R_k\to+\infty$, choosing a subsequence $\tilde\phi_{k_\ell}$, we may assume that
$$
\int_{\Sigma^m} \abs{\Delta\tilde\phi_{k_\ell}-\lambda\tilde\phi_{k_\ell}}^2d\mathit{Vol}\leqslant
(3\varepsilon_{k_\ell}+24C_*/\ell^{2})\frac{1+1/\ell}{1-1/{\ell}}\int_{\Sigma^m}\abs{\tilde\phi_{k_\ell}}^2 d\mathit{Vol}
$$
for all integers $\ell>1$. Since $\varepsilon_k=4\varepsilon_{R_k}\to 0+$, by Lemma~\ref{est},  we are done.
\qed

{\small

}

\end{document}